\definecolor{darkblue}{rgb}{0.0,0.0,0.6}
\definecolor{darkgreen}{rgb}{0.0,0.6,0.0}
\numberwithin{table}{section}    % for Table 1.1
\numberwithin{figure}{section}   % for Figure 1.1
\numberwithin{equation}{section} % for equation (1.1)
\newtheorem{assumption}[theorem]{Assumption}
\newtheorem{remark}[theorem]{Remark}
\newcommand{\ro}[1]{\textcolor{black}{#1}}
\newcommand{\ff}{\forall\,}
\newcommand{\eps}{\varepsilon}
\newcommand{\e}{\varepsilon}
\newcommand{\ld}{L^2(D)}
\newcommand{\hde}{H^s(D\setminus \bar E)}
\newcommand{\hd}{H^1_0(D)}
\newcommand{\hdd}{H^1(D)}
\newcommand{\gbe}{\bar g_\eps}
\newcommand{\hge}{\widetilde {g_\eps}}
\newcommand{\li}{L^\infty(D)}
\newcommand{\dense}{\overset{d}{\embed}}
\renewcommand{\o}{\omega}
\newcommand{\ogg}{\O_{g}}
\begin{document}

\title{Approximation of   shape optimization  problems with non-smooth PDE constraints:  an optimality conditions point of view}
%\title{Strong Stationarity for a highly nonsmooth optimal control problem  with  control  constraints
%}
\date{\today}
\author{L.\,Betz\footnotemark[1]}
\renewcommand{\thefootnote}{\fnsymbol{footnote}}
\footnotetext[1]{Faculty of Mathematics, University of Wuerzburg,  Germany}
\renewcommand{\thefootnote}{\arabic{footnote}}

\maketitle
%%%%%%%%%%%%%%%%%%%%%%%%%%%%%%%%%%%%%%%%%%%%%%%%%%%%%%%%%%%%%%%%%%
\begin{abstract}
This paper is concerned with a shape optimization problem governed by a non-smooth PDE, i.e., the nonlinearity in the state equation is not necessarily differentiable. 
%It is the first part in a series of three works which have the final goal to provide an optimality system in qualified form for the optimal shape. 
We follow the functional variational approach of \cite{ pen} where the set of admissible shapes is parametrized by a large class of continuous mappings. This methodology allows for both boundary and topological variations. It has the advantage that one can  rewrite the shape optimization problem as a control problem in a function space. To overcome the lack of convexity of the set of admissible controls, we provide an essential density property. This permits us to show that each parametrization associated to the optimal shape is  the limit of global optima 
of  non-smooth distributed optimal control problems. The admissible set  of the approximating minimization problems is a convex subset of a Hilbert space of functions. Moreover, its  structure is such that one can derive strong stationary optimality conditions \cite{p2}. The present manuscript provides the basis for the investigations from \cite{p3}, where   necessary  conditions in form of an  optimality system have been recently established.
\end{abstract}

\begin{keywords}
Optimal control of non-smooth PDEs,  shape optimization, topological variations, functional variational approach, approximation scheme of fixed domain type\end{keywords}
\begin{AMS}
49Q10, 35Q93, 49N99.
%\ro{  49Q10 - Optimization of shapes other than minimal surfaces  
%49N99-miscelaneous topics opt control
 %35Q93 - PDEs in connection with control and optimization }
\end{AMS}

  \section{Introduction}
  Shape optimization problems  arise in many engineering applications \cite{dz} and  are often formulated as minimization problems governed by one or more PDEs or VIs \cite{sz}. Their main particularity is the fact that these  equations are solved on  unknown domains \cite{sz, pir}. The  goal is to find the optimal shape, i.e., that domain for which the distance to a certain desired state is minimized.
 Thus, shape optimization problems exhibit similarities  with optimal control problems \cite{troe}, the essential difference and difficulty being that the admissible control set consists of variable geometries.
Such problems are highly nonconvex, which makes  their investigation  challenging both from the theoretical and the computational point of view.

% \ro{ first-order necessary optimality conditions in form of a qualified optimality system.}
  
  There is an enormous  amount of literature devoted to  the study of optimal design  problems, see e.g.\,the classical contributions \cite{sz,pir} and the references therein. To keep the depiction concise, we just focus on the works that deal with these problems at a theoretical level, with an emphasis on optimality systems in qualified  form. 
  %While most papers on the topic are concerned with the existence of the optimal shape and  sensitivity analysis, 
 %less is known when it comes to a rigorous derivation of optimality conditions in qualified  form. 
These resemble the classical Karush-Kuhn-Tucker conditions, and in the case  of optimal control problems, they involve an adjoint equation. In \cite{zs}, topology and  boundary variations are combined for the first time in order to derive necessary optimality conditions.
In \cite{at}, the considered shape and topology optimization problem is governed by a cone constraint and the existence of Lagrange multipliers is shown, the respective optimality conditions being expressed in the form of a complementarity system. 
Methods for computing the shape derivative of the cost functional without
resorting to the differentiability of the control-to-state map have been developed in \cite{k_sturm,kun, kk}. The techniques therein are applied for shape 
optimization problems governed by smooth PDEs.

Shape optimization problems with non-smooth constraints have been investigated at a theoretical level mostly 
with respect to existence of optimal shapes \cite{dm1, dm2} and   sensitivity analysis \cite{ nsz, hs,sz, sok}.  While there is an increasing number of contributions   concerning  optimal shape design problems governed by VIs, see \cite{dm1,dm2, nsz, hs, ylw, lsw} and the references therein, there are no papers known to the author that address the case where the governing equation is a \textit{non-smooth PDE}. In \cite{ylw, lsw}, the authors   resort to smoothening techniques and optimality systems in qualified form are obtained just for the smoothened problem \cite{ylw}, or, for the original problem \cite{lsw}, but  only if certain assumptions are imposed on the converging sequences. If smoothening is not involved, optimality conditions for the non-smooth shape optimization problem  do not involve an adjoint equation, unless linearity w.r.t.\,direction is assumed \cite{nsz}. Otherwise, these are stated just in a primal form \cite{hs}, i.e., the respective optimality condition only asserts the non-negativity of the shape derivative of the reduced objective functional in feasible directions.

As in most of the literature, the  approaches in all the aforementioned  papers (see also the references therein) are based on variations of the geometry. One of the  most common notions in this context are the 
shape derivative \cite{sz} and the topological derivative \cite{top_book}.

A more novel technique to derive optimality conditions in qualified form, where general functional variations instead of geometrical ones are involved, can be found in \cite{oc_t}. Therein, an optimal design problem governed by a linear PDE with Neumann boundary conditions is investigated. By means of the implicit parametrization theorem \cite{impl, impl3} combined with  Hamiltonian systems \cite{it_h},  the equivalence of the shape and topology optimization problem with an optimal control problem in function space is established, provided that the set of admissible geometries is generated by a certain class of continuous functions. The respective control problem in function space  is amenable to the derivation of optimality conditions by a  classical Lagrange multipliers approach. 
The same functional variational method will be adopted (to some extent) in the present work.

%The author is not aware of any works where  optimality systems involving an adjoint equation and a gradient equation are stated in the context of non-smooth  shape optimization problems without smoothening the problem. \ko{ Topological variations are included in our case, these have also not been adressed so far in the case of non-smooth problems.}

The aim of this paper is to provide a first essential step towards the derivation of  optimality systems  for the optimal shape associated to the following \textit{non-smooth} {shape} optimization problem 
\begin{equation}\tag{$P_\O$}\label{p_sh}
 \left.
 \begin{aligned}
  \min_{\O \in \OO, E \subset \O} \quad & \int_E (y_{\O}(x)-y_d(x))^2 \; dx+\alpha\,\int_{\O}\,dx, 
\\     \text{s.t.} \quad & 
  \begin{aligned}[t]
    -\laplace y_{\O} + \beta(y_{\O})&=f \quad \text{in }\O,
   \\y_{\O}&=0  \quad \text{on } \partial \O.\end{aligned} \end{aligned}
 \quad \right\}
\end{equation}
The  main particularities of \eqref{p_sh} are:
\begin{itemize}
\item the non-smooth character of $\beta$, which is supposed to be locally Lipschitz continuous and directionally differentiable, but \textit{not} necessarily \textit{differentiable}, e.g., $\beta=\max\{\cdot,0\}$ (we underline that we do not intend to replace $\beta$ by a differentiable function);
\item the fact that the governing PDE is solved on the unknown (variable) domain $\O$ (which plays the role of the control).
\end{itemize}
The admissible control set $\OO$ consists of (unknown) subdomains of a given, fixed domain $D$. These are the so-called \textit{admissible shapes}. In this manuscript, they are generated by a class of continuous functions, see Definition \ref{def:o}. \ro{As it turns out, $\OO$ covers all  subdomains of $D$ of class $C^2$ that contain the  observation set  $E$ and whose boundaries do not touch $\partial D$, cf.\,Proposition \ref{prop:c2}.} The holdall domain $D\subset \R^2$ is bounded, of class $C^{1,1}$, and $f \in L^2(D)$. In the current work, the non-smoothness $\beta$ can be thought of as a piecewise linear mapping. In \cite{p2}, which specifically deals with the lack of differentiability (see below), we will consider much more general situations. The symbol $-\laplace: \hd \to H^{-1}(D)$ denotes the Laplace operator in the distributional sense; note that $\hd$  is the closure of the set {$ C^\infty_c(D) $} w.r.t.\,the $H^1(D)-$norm.
The desired state $y_d$ is an $L^2-$function, which is defined \ro{on a fixed subdomain of $D$}, that  is, the  observation set  $E$ (Assumption \ref{assu:E}). The parameter $\alpha$ appearing in the objective is supposed to satisfy $\alpha \geq 0$.  

The main purpose of this manuscript is to develop an approximation scheme for \eqref{p_sh} by means of more approachable optimal control problems  where the underlying control space consists of functions, see \eqref{p10} below. We show that each parametrization associated to the optimal shape of \eqref{p_sh} is the limit of global minimizers of  \eqref{p10} (Corollary \ref{cor:os}). The approximating control problem is governed by a non-smooth PDE in the holdall domain $D$. Its structure  is such that, under certain requirements on the given data, it permits the derivation of strong stationary optimality systems \cite{p2}, i.e., systems that are equivalent to the  first-order necessary optimality condition. In the recent work \cite{p3} we to carry out  the passage to the limit $\e \to 0$ in these optimality conditions.

%In \ko{ \cite{}}, we derive strong stationary optimality conditions for the approximating minimization problem \eqref{p10}, meaning that these are equivalent to the first order necessary optimality condition. It is the scope of an upcoming work to carry out  the passage to the limit $\e \to 0$ in the optimality system associated to \eqref{p10}. 
The starting point of our analysis is the functional variational method  introduced by  \cite{nt}, where the admissible set $\OO$ is generated by a large class of functions \cite{pen}. We point out that this is  different from the level set method of \cite{os}. 
It allows one to switch from the shape optimization problem to a control problem in a function space, the latter being more amenable to a rigorous mathematical investigation.
 This technique involves the  description of the boundary of the unknown domain as the solution to a Hamiltonian system and it is based on the implicit function theorem \cite{impl, impl3} and Poincar\'e-Bendixson theory \cite[Ch.\,10]{hsd}.  The idea that the admissible shapes are parametrized by so called shape functions turned out to be successful  in various papers \cite{pen, t_jde, oc_t, nt, nt2, to, mt_new2, mt_new1, hmt}.
 The variations used in all these works have no prescribed geometric form (as usual in the literature) and the  methodology therein provides a unified analytic framework allowing for both boundary and topological variations. 
 Penalization methods were developed in \cite{pen} and extended in \cite{ t_jde, to}. We mention in particular \cite{t_jde} where the shape and topology optimization problem governed by a PDE with Neumann boundary conditions is equivalently rewritten as a control problem in a function space where the state equation is solved on the fixed domain while the boundary condition appears penalized in the objective.
The functional variational approach was also the fundament for the derivation of optimality conditions for optimal shape design problems governed by linear PDEs with Neumann boundary conditions in \cite{oc_t}. We also refer to the more recent contributions \cite{mt_new1} (clamped plates) and \cite{mt_new2} (Navier-Stokes equations).

Given a continuous function $g$  on the holdall domain $D$, we define 
 \begin{equation}\label{def:og0}
 \Omega_g:=\interior\{x \in D: g(x) \leq 0\}.
\end{equation}
%\ko{In order to be able to provide a more precise description of $\O_g$, $g$ has to satisfy certain requirements. Lemma \ref{fs} gives a precise description of $\O_g$ which allows representation via heaviside function, often used \[\int 1-h(g)=\int_{\O_g}\]} One of them is  
%\[g\leq 0 \quad \text{in }E,\] 
To ensure that $E \subset \O_g$, one requires \[g \ro{<} 0 \quad \text{in }E.\]  
Two other key properties which need to be imposed on the parametrization (of the unknown domain) are 
 \[|\nabla g|+|g|>0 \quad \text{in }D, \qquad g>0 \quad \text{on }\partial D.\]If $g \in C^2(\bar D)$, these  imply that $\partial \O_g$ is a finite union of closed disjoint $C^2$ curves, without self intersections, that do not intersect $\partial D$ \cite[Prop.\,2]{pen}. \ro{Let us stress out that this result in valid in two dimensions only and this is  the reason why we stick to the two-dimensional framework in this paper.} The above conditions imposed on $g$ give rise to the definition of the set of admissible shape functions, called $\FF_{\mathfrak{s}}$, see \eqref{f_s} and Remark \ref{rem:f_s} below.
We note that $\O_g$ may have many connected components. The admissible shape (domain)  that we use in the definition of $\OO$ below, see \eqref{o}, is the \ro{connected} component that contains the subdomain $E$. Since this may not be simply connected, the approach we discuss in this paper is related to topological optimization too.

By proceeding like this, we arrive at a reformulation of \eqref{p_sh} in terms of a  control problem in a function space. This reads as follows
\begin{equation}\tag{$P$}
 \left.
 \begin{aligned}
  \min_{g \in \FF_{\mathfrak{s}}} \quad & \int_E (y_{g}(x)-y_d(x))^2 \; dx+\alpha\,\int_{D}1-H(g)\,dx, 
\\     \text{s.t.} \quad & 
  \begin{aligned}[t]
    -\laplace y_{g} + \beta(y_{g})&=f \quad \text{in }\O_g,
   \\y_{g}&=0  \quad \text{on } \partial \O_g, \end{aligned} \end{aligned}
 \quad \right\}
\end{equation}where $H:\R \to\{0,1\}$ is the Heaviside function, see \eqref{h} below.

We underline the difficulty of the problem \eqref{p_shh}. The application of traditional optimization methods is excluded, as we deal with a control problem governed by a non-smooth PDE with the additional challenges  that the admissible set $\FF_{\mathfrak{s}}$ is non-convex, while the control does not appear on the right-hand side of the non-smooth PDE, but in the definition of the variable domain on which this is solved. Even in the case of classical control problems (where the domain is fixed), the non-smooth character is particularly  challenging, as the standard KKT theory cannot be directly employed if the differentiability of the control-to-state map is not available.
When we exclude the prominent smoothening techniques from \cite{barbu84}, 
  the derivation of (strong stationary) optimality conditions for non-smooth control problems in the presence of  control constraints  \cite{wachsm_2014, mcrf} is restricted to certain situations (so-called 'constraint qualifications'). 
As far as we know, there isn't any literature on the topic of optimality conditions for non-smooth optimal control problems with non-convex admissible set that does not involve smoothening. We emphasize that  we do not intend to  
resort to such techniques. In the  subsequent papers \cite{p2,p3} that continue the findings from the present manuscript,  all the  problems examined  (\eqref{p_sh}, \eqref{p_shh} and \eqref{p10}) feature the same non-differentiable mapping $\beta$.

  In order to deal with the control problem \eqref{p_shh}, and thus with  the shape optimization problem \eqref{p_sh}, 
  we introduce an approximation scheme  of fixed domain type \cite{nt, nt2}.  That is, we extend the state equation on the whole reference domain $D$.  
 We only smoothen the Heaviside function, since this has jumps, unlike the non-smoothness $\beta$ which is supposed to be continuous. By proceeding like this, we preserve the non-smooth character, and arrive, for $\e>0$ small, fixed, at the following approximating optimal control problem:
\begin{equation}\tag{$P_\eps$}
 \left.
 \begin{aligned}
  \min_{g \in   \FF} \quad & \int_E (y(x)-y_d(x))^2 \; dx+\alpha\,\int_{D} (1-H_\eps(g))(x)\,dx+\frac{1}{2}\,\|g-\bar g_{\mathfrak{s}}\|_{\WW}^2  \\   \text{s.t.} \quad & 
  \begin{aligned}[t]
   -\laplace y + \beta(y)+\frac{1}{\eps}H_\eps(g)y&=f +\eps g\quad \text{in }D,
   \\y&=0\quad \text{ on } \partial D. \end{aligned} 
 \end{aligned}
 \quad \right\}
\end{equation}
In the objective of \eqref{p10}, $\bar g_{\mathfrak{s}}$ is a local optimum of \eqref{p_shh} which  we intend to approximate by local optima of \eqref{p10} (adapted penalization \cite{barbu84}), in the sense that $\bar g_{\mathfrak{s}}$ will arise as the limit of a sequence of local minima of \eqref{p10} (Theorem \ref{thm:cor}). The mapping $H_\e$ is the regularization of the Heaviside function, cf.\,\eqref{reg_h} below.
We underline the fact that we replaced the non-convex set $\FF_{\mathfrak{s}}$ with a convex subset of the Hilbert-space $\WW:=\ld \cap \hde$, $s\in(1,2]$, namely
\begin{equation}
 \FF:=\{g \in \WW: g \leq 0\text{ a.e.\,in }E\}.
\end{equation}
The formulation of \eqref{p10} is slightly reminiscent of the minimization problem recently studied in \cite{mcrf} with the additional feature that the control enters the state equation in a non-linear fashion (via the mapping $H_\e$). However, the structure of \eqref{p10} allows us to state conditions on the given data under which the derivation of a complete (so-called strong stationary) optimality system is possible \cite{p2}.  
For more comments regarding the particularities of \eqref{p10}, see section \ref{sec:3}.
%Even better, the structure of \eqref{p10} is such that, in spite of the presence of control constraints, it allows for the derivation of a so-called strong stationary optimality system (complete optimality system) without imposing 'constraint qualifications' \cite{mcrf, wachsm_2014}, provided that $\beta$ is convex around its non-differentiable points \ko{\cite{p2}}, cf.\,also Remark \ref{rem:ss}. 
%For more comments regarding the particularities of \eqref{p10}, see section \ref{sec:3}.

 %In other words, in Part II,  \eqref{p10} is examined in a direct manner, without having to replace $\beta$ by a smooth function, and at the same time, obtain
 %an optimality system that is equivalent to the first order necessary optimality condition. 

 %Nevertheless, let us underline the sharpness of the optimality system for the optimal shape of \eqref{p_sh} obtained in Part III. The passage to the limit $\e \to 0$ yields a system consisting of  an adjoint equation, a gradient equation and, because of the non-smoothness, an inclusion, \ko{see Theorem \ref{thmm}. } \ko{we evn derive sign cond for the adoint state on certain sets, }

Our main goal is to show that local optima of \eqref{p_shh} can be approximated by local optima of  \eqref{p10} (Theorem \ref{thm:cor}). 
One of the fundamental challenges will arise from the fact that in \eqref{p10} we replaced the non-convex set $\FF_{\mathfrak{s}}$ from the original problem \eqref{p_shh} by $\FF$. To bridge this gap, we prove that $\FF_{\mathfrak{s}}$ is dense in $\FF$ w.r.t.\,the $\ld-$norm (section \ref{dense}).
Since  the control-to-state operator of \eqref{p_shh} is defined on $\FF_{\mathfrak{s}}$ only, not on $\FF$, the meanwhile standard arguments of the adapted penalization method  \cite{barbu84}  must be accordingly changed. This requires establishing certain convergences of the control-to-state operator $g \mapsto y_g$ of \eqref{p_shh} (subsection \ref{sec:s_e}), which are more difficult to obtain compared to the classical case where the domain is fixed and the control is distributed. The involved techniques, especially the density $\FF_{\mathfrak{s}} \dense \FF$, may be employed in the study of other similar problems (Remark \ref{rem:dense}).

The manuscript is organized as follows. After introducing the precise assumptions at the beginning of section \ref{sec:2}, subsection \ref{sec:as} deals with the definition of the admissible set $\OO.$ \ro{Here we establish that this consists of a certain class of $C^2$ domains  that contain the fixed subdomain $E$ (Proposition \ref{prop:c2}).} Then, in subsection \ref{sec:ref}, we reformulate the non-smooth shape optimization problem \eqref{p_sh} as a non-smooth control problem in a function  space, i.e., as  \eqref{p_shh}. Here, we also introduce the notion of local optimum for the latter. Section \ref{sec:3'} is dedicated to the  density property $\FF_{\mathfrak{s}} \dense \FF$, which is essential for making the connection between \eqref{p_shh} and the approximating non-smooth control problem with fixed domain  \eqref{p10}. This is introduced in section \ref{sec:3}. In subsection \ref{sec:solv} we show that the unique solution  of the state equation in \eqref{p_shh} is the limit of solutions to state equations in \eqref{p10}. Subsection \ref{sec:s_e} is dedicated to the convergence properties of the control-to-state maps of \eqref{p_shh} and \eqref{p10}. In subsection \ref{ex_p10} we establish that \eqref{p10} admits optimal solutions. Finally, section \ref{cor} contains the main results of this paper, namely Theorem \ref{thm:cor} and Corollary \ref{cor:os}. Theorem \ref{thm:cor} states that the approximation of \eqref{p_shh} by \eqref{p10} is meaningful in the sense that local minima of \eqref{p_shh} arise as limits of local optima of \eqref{p10}.  As a consequence, each parametrization of the optimal shape of \eqref{p_sh} is approximable by global minimizers of \eqref{p10} (Corollary \ref{cor:os}). The findings in this last section provide the basis for the investigations with respect to the derivation of qualified optimality systems for \eqref{p_sh}, see  \cite{p3}.

  \section{The shape optimization problem}\label{sec:2}
We begin this section by stating the precise assumptions on the non-smoothness $\beta$ appearing in the state equation in \eqref{p_sh}. We also mention the requirements needed for the observation set $E$. Then, in the upcoming subsections  we will introduce the set of admissible shapes $\OO$ for \eqref{p_sh}. This will allow us to 'rewrite' \eqref{p_sh} as an optimal control problem in a function space ({Proposition} \ref{rem:equiv}).
  \begin{assumption}[The non-smoothness]\label{assu:stand}
The  function  $\beta: \R \to \R$ is   
monotone increasing and {locally} Lipschitz continuous {in the following sense: F}or all $M>0$, there exists 
  a constant $L_M>0$ such that
\[
   |\beta(z_1) - \beta(z_2)| \leq L_M |z_1 - z_2| \quad \forall\, z_1, z_2 \in [-M,M] .
\]
 \end{assumption}
\begin{remark}
At this stage, we do not make any assumptions concerning the  limited differentiability properties of $\beta$ that were advertised in the introduction. These do not play any role in the present paper, but in {\cite{p2}}, which is a continuation of the present work. We just mention that, therein, $\beta$ is supposed to be semi-differentiable only,
i.e., the right and left side derivative of $\beta$ may not be equal at certain points, e.g.\, $\beta=\max\{\cdot,0\}$ or $\beta=|\cdot|.$
This is precisely one of the main challenges in \cite{p2}.
\end{remark}

By Assumption \ref{assu:stand}, it is straight forward to see that the Nemytskii operator $\beta:\li  \to \li$ is well-defined. Moreover, this is Lipschitz continuous on bounded sets in the following sense:  for every  $M > 0$, 
  there exists $L_M > 0$ so that
  \begin{equation}\label{eq:flip}
   \|\beta(y_1) - \beta(y_2)\|_{L^q(D)} \leq L_M \, \|y_1 - y_2\|_{L^q(D)} \quad \forall\, y_1,y_2 \in \clos{B_{\li}(0,M)},\ \forall\, 1\leq q \leq \infty.
  \end{equation}
 \begin{assumption}[The observation set]\label{assu:E}
The  set $E\subset D$  is a  domain. Moreover, $\dist(\clos E,\partial D)>0,$ where \[\dist(\clos E,\partial D):=\inf_{(x_1,x_2)\in \clos E \times \partial D} \dist(x_1,x_2).\]\end{assumption}

We point out that the last condition in Assumption \ref{assu:E} ensures that the set of admissible shape functions  $\FF_{\mathfrak{s}}$ is not empty, cf.\,\eqref{f_s} below.

In the rest of the paper, one tacitly supposes that Assumptions \ref{assu:stand} and \ref{assu:E} are always fulfilled without mentioning them every time. 
\subsection{The admissible set}\label{sec:as}
As pointed out in the introduction, we will work in the framework of \cite{pen, t_jde, oc_t, nt, nt2, to} (see also the references therein), where  the admissible set $\OO$ consists of a family of subdomains of $D$ that are generated by a certain class of continuous functions, called $\FF_{\mathfrak{s}}$, see \eqref{f_s} below.
For a general  mapping $g \in C(\bar D)$, we use the notation $\O_g$ to describe the following open subset of the holdall domain $D$:
\begin{equation}\label{def:og}
 \Omega_g:=\interior\{x \in D: g(x) \leq 0\} .
\end{equation}
Note that $\O_g$ is not necessarily connected and the choice of $g$ is not unique, in fact there are infinitely many functions generating the same $\O_g,$ see for instance the end of the proof of Lemma \ref{lem:g>0}. 
While $\O_g$  is an open set and may have many connected components, the
admissible shape (domain)  that we use in the definition of $\OO$ below, see \eqref{o}, is the component that contains the subdomain $E$, \ro{see Remark \ref{rem:e} below for details.}
%Its existence is  guaranteed by imposing that the parametrization $g$ satisfies $g\ro{<} 0$ in $E$.

\begin{definition}[The set of admissible shape functions]
We define
\ro{\begin{equation}\label{f_s}
\begin{aligned}
\FF_{\mathfrak{s}}&:=\{g \in C^2(\bar D):g(x) < 0\ \forall\,x\in E, \ |\nabla g(x)|+|g(x)|>0 \ \forall\,x\in D, \\&\  \ \ \ \  \ \qquad \qquad \quad g(x)>0 \ \forall\,x\in\partial D\} . \end{aligned}\end{equation}}\end{definition}

$\FF_{\mathfrak{s}}$ will be later the control set for the optimization problem \eqref{p_shh} below, as we will switch from the topology of subsets in the Euclidean space to the one in a function  space; note that \eqref{p_sh} and \eqref{p_shh} are 'equivalent' in the sense of Proposition  \ref{rem:equiv}. Instead of $C^2$ functions, we may take \ro{$C^{k,l}$ mappings in \eqref{f_s}, $k\geq 2, l \in [0,1]$} (this is possible due to Propositon \ref{dense}). Switching to \ro{$C^{k,l}$} shape functions impacts only the regularity of the boundary of $\O_g$, which is as smooth as $g$ is (see Lemma \ref{fs} or \ro{\cite[Thm.\,4.2]{dz}}). Since we want to keep the admissible set $\FF_{\mathfrak{s}}$ as large as possible and at the same time ensure the $H^2$ regularity of the state $y_g$ in \eqref{p_shh}, we choose to stick to the $C^2$-setting.

\begin{definition}[The set of admissible domains]\label{def:o}
The admissible set for the shape optimization problem \eqref{p_sh} is given by 
\begin{equation}\label{o}
\OO:=\{\text{the component of }\O_g \text{ that contains the set }E: g\in \FF_{\mathfrak{s}}\}.\end{equation} \end{definition}
 \ro{At the end of this subsection we will show that $\OO$ covers all subdomains of $D$ of class $C^2$ that contain $E$ and whose boundaries do not intersect $\partial D$.} These domains may not be simply connected, that is, topological and boundary variations are both included in our investigations.

\begin{remark}[The relevant component]\label{rem:e}
We point out that, for each $g \in \FF_{\mathfrak{s}}$, the existence of a component of $\O_g$ that contains $E$ is guaranteed by the inequality $g \ro{<} 0$ in $E$, cf.\,\eqref{f_s}. In fact, this is the only component of $\O_g$ that is relevant when solving the state equation in \eqref{p_sh}, since, in the objective, the integral term involving the state  
is taken over  $E$.
% In other words, it doesn't make  sense to solve the state equation on  components of $\O_g$ that do not contain $E$, since the associated states do not appear in the objective.

We also remark that, given a relevant component $\O \in \OO$, there is an infinite number of mappings  $g \in \FF_{\mathfrak{s}}$ so that $\O_g=\O,$ see Lemma \ref{lem:g>0} below.
\end{remark}

\begin{remark}[Comments regarding the definition of $\FF_{\mathfrak{s}}$]\label{rem:f_s}
Let us underline  that all the conditions appearing in the definition of $\FF_{\mathfrak{s}}$ are meaningful, as  they   guarantee that the admissible shapes have the properties stated  in Lemma \ref{fs} below.
 These are mostly used for \eqref{h_eq}, in the proof of Proposition \ref{prop:cor} and in subsection \ref{sec:s_e}.  Lemma \ref{fs}  will also play a central role when establishing  the limit adjoint equation in \cite{p3}.
 \\The inequality $g\ro{<} 0$ in $E$ implies  that all admissible shapes contain the subdomain $E$, as already mentioned. 
\\Since $g \in C^2(\bar D)$, the requirements $|\nabla g|+|g|>0 \text{ in }D$ and $ g>0 \text{ on }\partial D$ ensure that the set $\{x \in D:g(x)=0\}$ is a finite union of disjoint closed $C^2$ curves, without self intersections, not intersecting $\partial D$ \cite[Prop.\,2]{pen}. \ro{The fact that we  work in two dimensions only is  due to the viability of this assertion. If the three-dimensional case is considered, we need to include  more conditions in $\FF_{\mathfrak{s}}$ \cite{impl, impl3}.}
\end{remark}
%\begin{remark}
%The condition $g(x)>0 \ \forall\,x \not \in \clos \ogg$ from \eqref{f_s} may be dropped in the present paper.
%This is however essential when one wants to describe the boundary of $\Omega_g$ as the set $\{x \in D:g(x)=0\}$. Note that, when $g$ is smooth and the first three inequalities in \eqref{f_s} are true, $\{x \in D:g(x)=0\}$ may contain, in addition to $\partial \O_g$, other closed curves   \cite[Prop.\,2]{pen}. We point out that  in this paper, we only need that $g(x)>0$ for a.a.\,$x \not \in \clos \ogg$, which is in this case automatically fulfilled, cf.\,the proof of Lemma \ref{fs} below. 
%In the recent work \cite{p3} concerned with optimality conditions (which is based on the findings from the present manuscript), the characterization $\partial \Omega_g=\{g=0\}$ is however fundamental.   In order to avoid redefining $\FF_{\mathfrak{s}}$ there, we include now the requirement $g(x)>0 \ \forall\,x \not \in \clos \ogg$ in \eqref{f_s}.\end{remark}

\begin{lemma}[Properties of admissible shapes and $\O_g$]\label{fs}
Let $g \in \FF_{\mathfrak{s}}$ and denote by $\O \in \OO$ the relevant component of $\O_g,$ that is, the component that contains $E$. Then,
\begin{enumerate}
\item \label{class} $ \O \text{ is a domain of class }C^2$;
\item \label{class2} $\partial \O_g=\{x \in D: g(x) = 0\}$ and $\O_g=\{x \in D: g(x) < 0\};$    
\item \label{ls} $\mu\{x \in D: g(x) = 0\}=0$.
\end{enumerate}
\end{lemma}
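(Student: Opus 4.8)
All three assertions rest on the non-degeneracy requirement $|\nabla g| + |g| > 0$ in $D$, which makes $0$ a regular value of $g$: the gradient cannot vanish at any point of the zero level set. I would therefore establish the two characterizations in item \ref{class2} first, and then read off items \ref{class} and \ref{ls}. For the identity $\O_g = \{x \in D : g(x) < 0\}$, the inclusion ``$\supseteq$'' is immediate, since $\{g < 0\}$ is open by continuity of $g$ and is contained in $\{g \leq 0\}$, hence in its interior $\O_g$. For ``$\subseteq$'', suppose $x \in \O_g = \interior\{g \leq 0\}$ were such that $g(x) = 0$. Then some ball $B(x,r)$ lies in $\{g \leq 0\}$, so $g$ has a local maximum at $x$ and $\nabla g(x) = 0$; together with $g(x) = 0$ this violates $|\nabla g(x)| + |g(x)| > 0$. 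Hence $g < 0$ throughout $\O_g$.

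For the boundary identity $\partial \O_g = \{x \in D : g(x) = 0\}$ I would argue both inclusions. Since $\clos \O_g \subseteq \{g \leq 0\}$ by continuity, any $x \in \partial \O_g = \clos \O_g \setminus \O_g$ satisfies $g(x) \leq 0$ and, not belonging to $\O_g = \{g < 0\}$, also $g(x) \geq 0$, whence $g(x) = 0$. Conversely, if $g(x) = 0$ then $x \in D$ because $g > 0$ on $\partial D$; thus $\nabla g(x) \neq 0$, and a first-order expansion along $-\nabla g(x)$ produces points with $g < 0$ arbitrarily close to $x$. Therefore $x \in \clos \O_g \setminus \O_g = \partial \O_g$.

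Granted item \ref{class2}, items \ref{class} and \ref{ls} follow from the structure theorem \cite[Prop.\,2]{pen}: as $g \in C^2(\bar D)$ fulfils the first three defining inequalities of $\FF_s$, the set $\{g = 0\}$ is a finite union of pairwise disjoint, closed, self-intersection-free $C^2$ curves not meeting $\partial D$. Such a finite union of one-dimensional $C^2$ curves has vanishing two-dimensional Lebesgue measure, which is item \ref{ls}. For item \ref{class}, the relevant component $\O$ is open and connected, hence a domain; its boundary satisfies $\partial \O \subseteq \partial \O_g = \{g = 0\}$ (boundaries of components of an open set lie in the boundary of the set) and thus consists of finitely many of the above disjoint closed $C^2$ curves. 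Since $\nabla g \neq 0$ on $\{g=0\}$, the implicit function theorem represents $\partial \O$ locally as a $C^2$ graph, so $\O$ is of class $C^2$.

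The only genuinely delicate point is the boundary identity in item \ref{class2}, where one must preclude $\{g=0\}$ from containing curves that are not part of $\partial \O_g$. This is exactly where regularity of the value $0$ is indispensable: it forces $g$ to change sign transversally across every zero-level curve, so that each such point is truly approached from within $\{g < 0\}$. I note that the fourth defining inequality of $\FF_s$, namely $g > 0$ off $\clos \O_g$, is not needed for this lemma, being automatically implied by the first three together with non-degeneracy, in accordance with Remark \ref{rem:f_s}.
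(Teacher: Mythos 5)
Your proof is correct, and it reaches item \ref{class2} by a genuinely different route than the paper. The paper first invokes \cite[Prop.\,2]{pen} to get that $\{g=0\}$ is a finite union of disjoint closed $C^2$ curves (giving item \ref{class}), then deduces $\partial \O_g=\{g=0\}$ \emph{from the fourth defining condition} of $\FF_s$, namely $g>0$ outside $\clos\ogg$, via a contradiction argument, and only afterwards reads off $\O_g=\{g<0\}$ from \eqref{def:og}; item \ref{ls} is then a consequence (or follows from \cite[Lemma A.4]{ks}). You reverse the order and replace the use of the fourth condition by the non-degeneracy $|\nabla g|+|g|>0$: the local-maximum argument gives $\O_g=\{g<0\}$, and the first-order expansion along $-\nabla g(x)$ shows every zero is approached from $\{g<0\}$, yielding $\partial\O_g=\{g=0\}$ without ever touching the condition $g>0$ off $\clos\ogg$. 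This is consistent with (and slightly sharpens) the paper's own Remark \ref{rem:f_s}, which notes that only the a.e.\ version of that condition is needed here and that it follows from the first three inequalities; your argument makes the redundancy explicit at the level of every point, which is a small but genuine gain in self-containedness. For items \ref{class} and \ref{ls} both proofs rest on the same structure theorem \cite[Prop.\,2]{pen}; your additional appeal to the implicit function theorem and to $\partial\O\subseteq\partial\O_g$ just spells out what the paper leaves implicit.
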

\begin{proof} 
We observe that $\partial \O_g \subset \{x \in D: g(x) = 0\}$. As $g \in C^2(\bar D)$ and since it satisfies the gradient condition and the condition on the boundary of $D$ in \eqref{f_s}, one deduces from \cite[Prop.\,2]{pen}   that $\{x \in D: g(x) = 0\}$ is a finite union of disjoint closed $C^2$ curves, without self intersections, disjoint from $\partial D$. This implies the first claim.

\ro{Further, from \cite[Thm.\,4.2]{dz} it follows, by a contradiction argument, that $\interior \{x \in D: g(x)\leq 0\}=\{x \in D: g(x) < 0\}$ and $\partial \{x \in D: g(x) < 0\}=\{x \in D: g(x) = 0\}$. This gives us the second statement.}

The last assertion is  a direct consequence of the first two. It can also be deduced directly from  \cite[Lemma A.4]{ks} and the gradient condition in the definition of $\FF_{\mathfrak{s}}$.
\end{proof}                                                                                                                               

% In light of \eqref{ls} and \eqref{class}, we can apply Proposition \ref{prop:cor}, and thus,
%\begin{equation}\label{convv}
%S_\eps(g)_{|_{\O_g}} \weakly y_g \quad \text{in }H^1(\O_g) \ \text{ as }\eps \searrow 0,\quad \forall\,g \in \FF_{\mathfrak{s}},\end{equation}where $S_\eps$ is the control-to-state operator associated to the state equation \eqref{eq} (with $H_\eps$ given by \eqref{reg_h}).

%In particular, the st eq in \eqref{p_shh} admits a  unique  solution $y_g \in H_0^1(\O_g)$ for each $g\in \FF_{\mathfrak{s}}.$

%Now, let $\bar g \in \FF_{\mathfrak{s}}$ be a strict local optimum of \eqref{p_shh} in the $\ld-sense,$ i.e., 
%\begin{equation}\label{loc_optt}
%j(\bar g) < j(g) \quad \ff g \in \FF_{\mathfrak{s}} \text{ with }\|g-\bar g\|_{\ld}\leq  r,
%\end{equation}where $$j(g):=\int_E (y_g(x)-y_d(x))^2 \; dx+\alpha\,\int_{D} (1-H(g))(x)\,dx$$ is the reduced cost functional associated to the control problem \eqref{p_shh}.
%With the notation 
%$$j_\eps(g):=\int_E (S_\eps(g)(x)-y_d(x))^2 \; dx+\alpha\,\int_{D} (1-H_\eps(g))(x)\,dx$$
%we then have 
%\begin{equation}\label{conv_j}
%j_\eps( g) \to j(g) \quad \ff g \in \FF_{\mathfrak{s}} \quad \text{as }\eps \searrow 0,
%\end{equation}in view of \eqref{convv}. In light of \eqref{loc_optt}, there is an $\eps_0$ so that for all $\eps \in (0,\eps_0]$ we have 
%\begin{equation}\label{j_e}
%j_\eps(\bar g) < j_\eps(g) \quad \ff g \in \FF_{\mathfrak{s}} \text{ with }\|g-\bar g\|_{\ld}\leq  r.
%\end{equation}

\ro{We end this subsection with an essential observation.
\begin{proposition}\label{prop:c2}
It holds 
\[\OO=\{\O \subset D:\O \text{ is a domain of class $C^{2}$  that contains }E, \ \partial \O \cap \partial D \neq \emptyset\}.\]
\end{proposition}
\begin{proof}While the inclusion $\subset$ is due to Lemma \ref{fs}, the opposite one can be shown by the exact same 
  arguments as in the proof of \cite[Thm.\,4.1, sec. 4, p.75]{dz} with one difference: one replaces the distance functions defined therein by appropriate  $C^\infty$ functions. Let $\O \subset D$ so that $\O \text{ is a domain of class $C^{2}$  that contains }E, \ \partial \O \cap \partial D \neq \emptyset$. To show that there exists $g \in \FF_\mathfrak{s}$ so that $\ogg=\O$, one defines
 $\rho,\rho_c \in C^\infty(\bar D)$  as follows
\begin{equation}
\rho(y):=\left\{
\begin{aligned}
\in (0,1] &\quad \text{for $y \in  \O^c \setminus \bar W$},
\\0 &\quad \text{for $y \in \bar W \cup \O=\bar W \cup \bar \O$},
\end{aligned}\right.
\end{equation} 
\begin{equation}
\rho_c(y):=\left\{
\begin{aligned}
\in (0,1] &\quad \text{for $y \in  \O \setminus \bar W$},
\\0 &\quad \text{for $y \in \bar W \cup \O^c=\bar W \cup \bar \O^c$},
\end{aligned}\right.
\end{equation} 
where  $\O^c$ stands for $\bar D \setminus \O$ and $W$ is an appropriate neighbourhood of $\partial \O,$ see the proof of \cite[Thm.\,4.1, sec. 4, p.75]{dz} for details.
%$\rho_c(y)=1$ in a point in $\O$, say $y_0$ and $\rho_c(y)\in(0,1)$ for $y \in  \O \setminus \bar W\setminus \{y_0\}.$
Then, by arguing exactly in the same way as in the proof of \cite[Thm.\,4.1, sec. 4, p.75]{dz}, the desired result follows. \end{proof}}

\subsection{Reformulation of  \eqref{p_sh} as an optimal control problem in a function space}\label{sec:ref}
In this subsection we rewrite \eqref{p_sh} as an optimal control problem where the admissible set no longer consists of subsets of $D$, but of functions. In other words, we change the topology, so that \eqref{p_sh}  becomes  amenable to  optimal control  methods. This is possible in light of  the definition of the set of admissible shapes, see \eqref{o}. The reformulation of \eqref{p_sh} reads
\begin{equation}\tag{$P$}\label{p_shh}
 \left.
 \begin{aligned}
  \min_{g \in \FF_{\mathfrak{s}}} \quad & \int_E (y_{g}(x)-y_d(x))^2 \; dx+\alpha\,\int_{D}1-H(g)\,dx, 
\\     \text{s.t.} \quad & 
  \begin{aligned}[t]
    -\laplace y_{g} + \beta(y_{g})&=f \quad \text{in }\O_g,
   \\y_{g}&=0  \quad \text{on } \partial \O_g, \end{aligned} \end{aligned}
 \quad \right\}
\end{equation}where $H:\R \to\{0,1\}$ is the Heaviside function \begin{equation}\label{h}
H(v):=\left\{\begin{aligned}0,&\quad \text{if }v\leq 0,
\\1&\quad \text{if }v>0.
\end{aligned}\right.\end{equation}
Regarding the second term in the objective, we note that 
\begin{equation}\label{h_eq}\int_{D}1-H(g)\,dx=\int_{\ogg}\,dx \quad \forall\,g \in \FF_{\mathfrak{s}},\end{equation}in view of Lemma \ref{fs}.

\begin{lemma}\label{lem:g>0}
For each $\O \in \OO$ there exists  an infinite number of mappings $g \in \FF_{\mathfrak{s}}$ so that $\O_g=\O$.\end{lemma}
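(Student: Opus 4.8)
The plan is to concentrate all the content into the construction of a \emph{single} parametrization $g_0 \in \FF_s$ with $\O_{g_0}=\O$ \emph{exactly}, and then to manufacture infinitely many others from it by a trivial rescaling. Indeed, suppose $g_0 \in \FF_s$ with $\O_{g_0}=\O$ is at hand. For every constant $c>0$ the function $c\,g_0$ still lies in $C^2(\bar D)$, has the same sign as $g_0$ at each point — so that $\{c\,g_0\le 0\}=\{g_0\le 0\}$ and hence $\O_{c g_0}=\O$ — and meets the nondegeneracy requirement because $\nabla(c\,g_0)=c\,\nabla g_0$ does not vanish wherever $g_0=0$. The remaining conditions in \eqref{f_s} ($g\le 0$ on $E$, $g>0$ on $\partial D$, $g>0$ outside $\clos\O$) are sign conditions and are likewise preserved. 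Since distinct $c$ give distinct functions, this already produces infinitely many admissible parametrizations of $\O$ (more generally one may multiply $g_0$ by any $\phi\in C^2(\bar D)$ with $\phi>0$ on $\bar D$), so everything reduces to the existence of one such $g_0$.

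To build $g_0$, I would start from the parametrization supplied by Definition \ref{def:o}: there is some $\tilde g\in\FF_s$ such that $\O$ is the component of $\O_{\tilde g}$ containing $E$. By Lemma \ref{fs}, $\O$ is a $C^2$ domain, $\partial\O_{\tilde g}=\{\tilde g=0\}$ is a finite union of pairwise disjoint closed $C^2$ curves disjoint from $\partial D$, and $\O_{\tilde g}=\{\tilde g<0\}$. Since $\tilde g$ changes sign across each such curve (its gradient being nonzero there), each curve borders exactly one component, so the closures of distinct components of $\O_{\tilde g}$ are pairwise disjoint compact sets; in particular $\clos\O$ lies at positive distance from the union of the closures of the other components and from $\partial D$. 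I would then choose nested neighbourhoods $\clos\O\subset W\subset W'$, small enough that $W'$ avoids the other components, pick a cut-off $\eta\in C^\infty(\bar D)$ with $\eta\equiv 1$ on $W$ and $\eta\equiv 0$ off $W'$, and set $g_0:=\eta\,\tilde g+(1-\eta)$. On $W$ one has $g_0=\tilde g$, so $\{g_0\le 0\}\cap W=\clos\O$ and $\nabla g_0=\nabla\tilde g\ne 0$ on $\partial\O=\{g_0=0\}$; off $\clos\O$ one gets $g_0>0$ (where $\eta=0$, $g_0=1$; on the transition set $\{0<\eta<1\}\subset W'\setminus W$ one has $\tilde g>0$ and $1-\eta>0$). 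Hence $\{g_0\le 0\}=\clos\O$, so $\O_{g_0}=\interior\clos\O=\O$ because $\O$ is a $C^2$ domain, and all the inequalities defining $\FF_s$ hold, giving $g_0\in\FF_s$.

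The only genuinely delicate point is this second step: discarding the irrelevant components of $\O_{\tilde g}$ while keeping the result globally $C^2$ and, crucially, preserving $|\nabla g_0|+|g_0|>0$ on the zero set. Everything hinges on arranging the cut-off so that the gluing happens strictly inside the region $\{\tilde g>0\}$ separating $\clos\O$ from the other components; the topological input that makes this possible — disjointness of the component closures — comes directly from Lemma \ref{fs} and \cite[Prop.\,2]{pen}. An alternative to the gluing would be to take $g_0$ to be a $C^2$ smoothing of the signed distance function to $\partial\O$ (which is $C^2$ and has unit gradient in a tubular neighbourhood of the $C^2$ curves), but reusing $\tilde g$ as above seems shorter and more transparent.
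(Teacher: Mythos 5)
Your argument is correct and follows essentially the same strategy as the paper's proof: both start from a parametrization $\widehat g\in\FF_s$ provided by Definition \ref{def:o}, exploit the fact that $\clos{\O}$ has positive distance to the closures of the remaining components of $\O_{\widehat g}$ (a consequence of the disjoint-closed-curves structure from Lemma \ref{fs} and \cite[Prop.\,2]{pen}), modify $\widehat g$ only away from $\clos{\O}$ so as to make it strictly positive there while leaving it untouched on $\clos{\O}$, and then produce infinitely many admissible parametrizations by a trivial one-parameter family. The only real difference is the implementation of the modification step: the paper adds $c\,\dist(\clos{\O},\cdot)^3$ with $c$ large (and obtains the infinite family as $g+n\,\dist(\clos{\O_g},\cdot)^3$, $n\in\N$), whereas you glue $\widehat g$ to the constant $1$ via a smooth cut-off whose transition region is placed inside the set $\{\widehat g>0\}$ separating $\clos{\O}$ from the other components, and then rescale by positive constants. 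Your variant has the minor advantage that $\eta\,\widehat g+(1-\eta)$ is manifestly $C^2$, while the $C^2$ regularity of $\dist(\clos{\O},\cdot)^3$ asserted in the paper is not automatic away from $\clos{\O}$ (the distance function is merely Lipschitz near its cut locus); the price is that you must justify, as you do, that the gluing can be confined to $\{\widehat g>0\}$, which again rests on the pairwise disjointness of the component closures.
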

\begin{proof}
Since $\O \in \OO$, there exists 
$\widehat g \in \FF_{\mathfrak{s}}$ so that $\O$ is the component of $\O_{\widehat g}$ that contains $E$, cf.\,\eqref{o}.
From \cite[Prop.\,2]{pen}, see also the proof of Lemma \ref{fs}, we know that $\O_{\widehat g}$ has a finite number of components, and we define
\ro{\[\MM:=\cup_{\omega \neq \O, \\ \o \text{ is a component of }\O_{\widehat g}} \clos \o.\]
Note that $\dist(\clos{ \O},\clos \o)>0$ for each component $\o$ of $\O_{\widehat g}$ which is different from $\O$ (the boundary of $\O_{\widehat g}$ consists of disjoint closed curves  \cite[Prop.\,2]{pen}, see also the proof of Lemma \ref{fs}).
Now, 
define the $C^\infty$ function
\begin{equation}
h:=\left\{
\begin{aligned}
0 &\quad \text{for $x \in \clos \O$},
\\\in [0,2]&\quad \text{for $x \in \clos D \setminus (\clos \O \cup \MM)$},
\\2 &\quad \text{for $x \in \MM$}.
\end{aligned}\right.
\end{equation}
and
$$ g:= \widehat g-(\min_{x \in \clos D} \widehat g) h.$$ Note that  $\min_{x \in \clos D} \widehat g < 0,$ otherwise $\widehat g\geq 0$ in $\clos D$, which contradicts $\widehat g \in \FF_{\mathfrak{s}}$.}
We observe that 
\[g= \widehat g \ \text{ in }\clos{ \O}, \quad g>0 \ \text{ in }\bar D \setminus \clos{ \O},\]
since $\widehat g\ro{> }0$ outside $\clos \O_{\widehat g}$ (Lemma \ref{fs}). As $\widehat g \in \FF_{\mathfrak{s}}$, we can conclude from the above that $g \in \FF_{\mathfrak{s}}$ with $\O_{g}=\O.$ Finally, we see that one can define an infinite number of mappings with this property, say \ro{$ g_n:= n\,g, n \in \N$.}
\end{proof}

The shape optimization problem \eqref{p_sh} with the admissible set $\OO$ from Definition  \ref{def:o} is equivalent to \eqref{p_shh} in the following sense. 
\begin{proposition}\label{rem:equiv}
Let  $\O^\star \in \OO$ be an optimal shape of \eqref{p_sh}. Then, each of the  functions  $g^\star \in \FF_{\mathfrak{s}}$ that satisfy $\O_{g^\star}=\O^\star$ is a global minimizer of \eqref{p_shh}. Conversely, if $g^\star \in \FF_{\mathfrak{s}}$ minimizes \eqref{p_shh}, then the component of $\O_{g^\star}$ that contains $E$ is an optimal shape for \eqref{p_sh}.
\end{proposition}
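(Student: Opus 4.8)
The plan is to reduce the equivalence to a comparison of objective values through the correspondence $g \mapsto \O_g$, by establishing a single one-sided inequality linking the two objectives and then exploiting the equality case furnished by Lemma \ref{lem:g>0}.

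First I would record three facts. (1) By Lemma \ref{fs} the set $\O_g$ has finitely many components with pairwise disjoint closures, so the monotone state equation in \eqref{p_shh} decouples across them; by uniqueness of its solution, the restriction of $y_g$ to the relevant component $\O$ (the one containing $E$, which exists because $g \leq 0$ on $E$) coincides with the state $y_\O$ of \eqref{p_sh} on $\O$. Since $E \subset \O$, the observation integrals then agree: $\int_E (y_g - y_d)^2\,dx = \int_E (y_\O - y_d)^2\,dx$. (2) By \eqref{h_eq} the penalty term of \eqref{p_shh} equals $\alpha \int_{\O_g}dx$, and since $\O \subset \O_g$ and $\alpha \geq 0$ this is $\geq \alpha \int_\O dx$, with equality exactly when $\O_g$ has no component other than $\O$. (3) For each $\O \in \OO$, Lemma \ref{lem:g>0} produces $g \in \FF_s$ with $\O_g = \O$, realizing the equality case in (2). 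Writing $J$ and $I$ for the objectives of \eqref{p_shh} and \eqref{p_sh}, facts (1) and (2) would give the key bound $J(g) \geq I(\O)$ whenever $\O$ is the relevant component of $g$, with equality when $\O_g = \O$.

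Granting this bound, both directions become short value comparisons. For the forward direction, given an optimal $\O^\star$ for \eqref{p_sh} and any $g^\star \in \FF_s$ with $\O_{g^\star} = \O^\star$, I would use the equality case to get $J(g^\star) = I(\O^\star)$ and then, for an arbitrary competitor $g$ with relevant component $\O \in \OO$, chain $J(g) \geq I(\O) \geq I(\O^\star) = J(g^\star)$, the middle step being optimality of $\O^\star$; hence $g^\star$ minimizes \eqref{p_shh}. For the converse, given a minimizer $g^\star$ of \eqref{p_shh} with relevant component $\O^\star$, and any $\O \in \OO$, I would pick $g$ as in Lemma \ref{lem:g>0} so that $J(g) = I(\O)$ and chain $I(\O^\star) \leq J(g^\star) \leq J(g) = I(\O)$; since $\O$ is arbitrary, $\O^\star$ is optimal for \eqref{p_sh}.

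The main obstacle I anticipate is the structural mismatch of the two penalty terms: \eqref{p_shh} charges the volume of the entire $\O_g$, whereas \eqref{p_sh} charges only a single component. This is precisely why one obtains the one-sided inequality $J(g) \geq I(\O)$ rather than an identity; the gap is closed using $\alpha \geq 0$ together with Lemma \ref{lem:g>0}, which supplies comparison functions whose generated domain carries no spurious extra components. The other point requiring care is the decoupling of the non-smooth state equation over the components of $\O_g$ and the uniqueness of its solution, which together justify identifying $y_g$ on $\O$ with $y_\O$ and thereby make the observation integrals coincide.
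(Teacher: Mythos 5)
Your proposal is correct and follows essentially the same route as the paper: both rest on the one-sided inequality obtained from $\alpha\int_{\O}dx\leq\alpha\int_{\O_g}dx$ for the relevant component $\O\subset\O_g$ (together with the agreement of the observation integrals via the control-to-state map restricted to that component), and both close the gap using Lemma \ref{lem:g>0} to produce parametrizations realizing the equality case $\O_g=\O$. The two chains of inequalities you write out for the forward and converse directions are exactly the ones in the paper's proof.
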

\begin{proof}
Let $ \O^\star \in \OO$ be an optimal shape for \eqref{p_sh} with $\OO$ as in \eqref{o}, i.e.,
\[\int_E (y_{ \O^\star}(x)-y_d(x))^2 \; dx+\alpha\,\int_{ \O^\ast}\,dx
\leq \int_E (y_{ \O}(x)-y_d(x))^2 \; dx+\alpha\,\int_{\O}\,dx \quad \forall\,\O \in \OO.
\]
Now, let $g^\star \in \FF_{\mathfrak{s}}$ with $\O_{g^\star}=\O^\star$ be fixed (note that, according to Lemma \ref{lem:g>0}, there are infinitely many mappings with this property).  
Then, 
$y_{\O^\star}=y_{ g^\star}$ and 
\[\int_E (y_{g^\star}(x)-y_d(x))^2 \; dx+\alpha\,\int_{\O_{ g^\star}}\,dx
\leq \int_E (y_{ \O}(x)-y_d(x))^2 \; dx+\alpha\,\int_{\O}\,dx \quad \forall\,\O \in \OO.
\]
Let $g \in \FF_{\mathfrak{s}}$ be arbitrary and fixed and denote by $\O$ the component of $\O_{ g}$ that contains $E$. Testing with this particular $\O$ in the above inequality yields 
\begin{align*}\int_E (y_{g^\star}(x)-y_d(x))^2 \; dx+\alpha\,\int_{\O_{g^\star}}\,dx
&\leq \int_E (y_{ g}(x)-y_d(x))^2 \; dx+\alpha\,\int_{\O}\,dx \\&\leq \int_E (y_{ g}(x)-y_d(x))^2 \; dx+\alpha\,\int_{\O_g}\,dx ,\end{align*}
where we used that $y_g=y_\O$ in $E$. 
Since $g^\star \in \FF_{\mathfrak{s}}$ and $g \in \FF_{\mathfrak{s}}$ was arbitrary, this proves the first statement, see \eqref{h_eq}.

To show the converse assertion, assume that $g^\star \in \FF_{\mathfrak{s}}$ satisfies
\begin{equation}\label{inn}
\int_E (y_{g^\star}(x)-y_d(x))^2 \; dx+\alpha\,\int_{\O_{g^\star}}\,dx
\leq \int_E (y_{ g}(x)-y_d(x))^2 \; dx+\alpha\,\int_{\O_g}\,dx \quad \forall\,g \in \FF_{\mathfrak{s}}.\end{equation}
We denote by $\o_{g^\star}$  the component of $\O_{g^\star}$ that contains $E$. This implies 
\[y_{\o_{g^\star}}=y_{ g^\star} \quad \text{in }E.\]
Let $\O \in \OO$ be arbitrary but fixed. Again, by Lemma \ref{lem:g>0}, we can define $\widetilde g \in \FF_{\mathfrak{s}}$ so that $\O_{\widetilde g}=\O.$ Then,
$y_{\O}=y_{\widetilde g}$. In view of \eqref{inn}, where we test with $\widetilde g$, we have
\begin{align*}
\int_E (y_{\o_{g^\star}}(x)-y_d(x))^2 \; dx+\alpha\,\int_{\o_{g^\star}}\,dx
&\leq 
\int_E (y_{g^\star}(x)-y_d(x))^2 \; dx+\alpha\,\int_{\O_{g^\star}}\,dx
\\&\leq \int_E (y_{ \O}(x)-y_d(x))^2 \; dx+\alpha\,\int_{\O}\,dx.
\end{align*}
Since $\o_{g^\star} \in \OO$ and $\O \in \OO$ was arbitrary, this proves the second assertion.
\end{proof}

From now on, all our findings concern the optimal control problem formulated as  \eqref{p_shh}. 
We focus not only on global minimizers, but on the much larger class of local optima in the $\ld$-sense, which are defined as follows.
\begin{definition}\label{def_sh}
We say that $\bar g_{\mathfrak{s}} \in \FF_{\mathfrak{s}}$ is locally optimal for \eqref{p_shh} in the $\ld$ sense, if there exists $r>0$ such that 
\begin{equation}\label{loc_opt_shh}
\JJ(\bar g_{\mathfrak{s}}) \leq \JJ(g) \quad \ff g \in \FF_{\mathfrak{s}} \text{ with }\|g-\bar g_{\mathfrak{s}}\|_{\ld}\leq  r,
\end{equation}where $$\JJ(g):=\int_E (\SS( g)(x)-y_d(x))^2 \; dx+\alpha\,\int_{D} (1-H(g))(x)\,dx$$ is the reduced cost functional associated to the control problem \eqref{p_shh}  and  \[\SS:g \in \FF_{\mathfrak{s}} \mapsto y_g \in H_0^1(\O_g) \cap H^2(\O_g)\]
 denotes the control-to-state map. This will be introduced below, see Definition \ref{S}.
\end{definition}
\begin{remark}
%The fact that we work with local optima in the $\ld$-sense will require that we set $\alpha=0$ in the objective of \eqref{p_shh}. Otherwise existence of optimal solutions for the approximating control problem is not clear (this has to do with the fact that the term appearing in the approximating objective $\alpha\,\int_{D} (1-H_\eps(\cdot))(x)\,dx$ is not weakly lower semicontinuous on $\ld$). 
Clearly, in the study of local optima in the $\ld$ sense of \eqref{p_shh} we cover global optima, and thus, the associated (global) optimal shapes of \eqref{p_sh}, cf.\, Proposition \ref{rem:equiv}.

Note that the set of local optima in the $H^1(D)$-sense is larger than the set of local optima in the $\ld$-sense. However,  we choose to work with the concept of local optima in the $\ld$-sense, in view of the essential density property from Proposition \ref{dense} below. This does not hold w.r.t.\,the $H^1(D)$ norm because of  the boundary condition on $\partial D$ in \eqref{f_s}.  Proposition \ref{dense} is however essential in the proof of Theorem \ref{thm:cor}, as a closer inspection shows; it ensures that $\{\widehat g_\eps\}\subset \FF_{\mathfrak{s}}$ is contained in the ball of local optimality of $\bar g_{\mathfrak{s}}$. If this ball of local optimality is taken w.r.t.\,the $H^1(D)-$norm, then it is necessary that Proposition \ref{dense} is also true w.r.t.\,the $H^1(D)-$norm, which is not the case, as explained above.
%Besides, if we consider local optima in the $H^1(D)$-sense, it is not clear how to provide strong stationary optimality conditions \cite{p2} for the approximating control problem, see  \eqref{p10} below. One could smoothen the problem by replacing the non-smoothness $\beta$ by a smooth function, but passing to the limit $\e \searrow 0$ then leads to a weaker limit optimality system for \eqref{p_shh}.
\end{remark}

\begin{remark}[Relation between the notion of local optimality in Definition \ref{def_sh} and 'local' optimal shape]
If two admissible shapes are 'close' to each other, this does not mean that the same is true for their parametrizations, since, given $g\in \FF_{\mathfrak{s}}$, there is an infinity of functions in $\FF_{\mathfrak{s}}$  generating $\O_g \in \OO$ (see Lemma \ref{lem:g>0}). In fact, for each $ g \in \FF_{\mathfrak{s}}$, there is a sequence $\{g_n\}\subset \FF_{\mathfrak{s}}$ so that $\O_{ g}=\O_{g_n}, \ n \in \N,$ and at the same time $\|g_n-g\|_{\ld} \to \infty$ as $n \to \infty$ (cf.\,the end of the proof of Lemma \ref{lem:g>0}).
\\
On the other hand, if $\|g_n-g\|_{\ld} \to 0$, $g_n, g \in \FF_{\mathfrak{s}}$, we have
\[\mu\{x \in D: g>0 \text{ and }g_n \leq 0\} \to 0 \quad \text{as }n \to \infty,\]
\[\mu\{x \in D: g<0 \text{ and }g_n \geq 0\} \to 0 \quad \text{as }n \to \infty,\] see Lemma \ref{lem:app}. 
In light of Lemma \ref{fs}, this accounts to
\[\mu(\O_{g_n}\setminus \O_{ g})\to 0 \quad \text{as }n \to \infty,\]
\[\mu( \O_{ g}\setminus \O_{ g_n})\to 0 \quad \text{as }n \to \infty.\]
It is an open question how to define the notion of 'local optimal shape' so that it makes sense in connection with the $L^2$ local optimality of the associated parametrizations; this may involve the Hausdorff-Pompeiu distance or other notions of distances  between two sets. The above convergences may be a good starting point for further investigations on this particular topic.
At this stage, it is however clear that our method covers all global optimal shapes (Proposition \ref{rem:equiv} and Corollary \ref{cor:os}). \end{remark}

\section{Density of the set of admissible shape functions in $L^p(D)$, $p\in[1,\infty)$}\label{sec:3'}
As already mentioned in the introduction,   we cannot tackle \eqref{p_shh} in a  direct manner, so that we will consider an approximation scheme, cf.\,sections \ref{sec:3} and \ref{cor} below. 
One of the main challenges that arise in the investigation of \eqref{p_shh} is the structure of the set $\FF_{\mathfrak{s}}$. It is a non-convex  cone, while the governing PDE is non-smooth and the control appears as the parametrization of the unknown domain. 
The admissible set $\FF_{\mathfrak{s}}$ has however a density property, which allows us to examine  the approximating optimal control problem \eqref{p10} below on a convex subset of a Hilbert space,  see \eqref{f_tilde}. 

The focus of this subsection is to prove the aforementioned density property, which is stated in the following (in a slightly more general form):
\begin{proposition}\label{dense}
Let \[\widetilde \FF:=\{g \in L^2(D):g \leq 0 \text{ a.e.\,in }E\}.\]
For each $g \in \widetilde \FF$,  there exists a sequence $\{g_m \} \subset  \FF_{\mathfrak{s}} \cap C^\infty(\bar D)$ so that $$g_m \to g \quad \text{in }L^2(D) \quad \text{ as }m \to \infty.$$
\end{proposition}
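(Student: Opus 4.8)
The plan is to build the approximants in stages, installing one defining property of $\FF_s$ at a time while keeping the $\ld$-error under control, and to obtain the two ``transversality-type'' conditions in \eqref{f_s} for free at the very end by means of Sard's theorem. I would first reduce to a bounded function: truncating $g$ at $\pm K$ preserves the constraint $g\le 0$ a.e.\ in $E$ and converges to $g$ in $\ld$ as $K\to\infty$, so I may assume $g\in L^\infty(D)\cap\widetilde\FF$.

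The delicate point is that mollifying $g$ directly destroys the sign constraint on $E$, since the mollification averages in values taken just outside $E$, where $g$ may be positive. To avoid this I would treat $E$ and its complement separately. Set $w:=\min(g,0)\le 0$ on all of $D$; since $\rho_\delta\ge 0$, the mollification $w\ast\rho_\delta$ is smooth and \emph{still} $\le 0$ everywhere, and it agrees with $g$ a.e.\ on $E$ because $w=g$ a.e.\ there. Choosing an open neighbourhood $E'\supset\clos{E}$ with $\clos{E'}$ disjoint from $\partial D$ (possible because $d(\clos{E},\partial D)>0$ in Assumption \ref{assu:E}) and a cutoff $\zeta$ equal to $1$ on $\clos{E}$ and supported in $E'$, I would glue $\zeta\,(w\ast\rho_\delta)+(1-\zeta)\,(g\ast\rho_\delta)$. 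On $E$ this equals $w\ast\rho_\delta\le 0$, outside $E'$ it equals $g\ast\rho_\delta$, and the only region where the glued function differs appreciably from $g$ is $E'\setminus E$. Here the measure-zero boundary hypothesis in Assumption \ref{assu:E} enters decisively: since $|\partial E|=0$, one has $|E'\setminus E|\to 0$ as $E'\downarrow\clos{E}$, so a diagonal argument in $(\delta,E')$ produces a smooth function that is $\le 0$ on $E$ and $\ld$-close to $g$.

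Next I would enforce the remaining pointwise constraints by two cheap local corrections on sets of small measure. For the boundary requirement I would interpolate, on a thin collar $U$ of $\partial D$ disjoint from $E'$, between the current function and a positive constant $P$, so that the result equals $P>0$ near $\partial D$; since $|U|$ is small and all values stay bounded, the $\ld$-cost is negligible. To make the function \emph{strictly} negative on $E$ I would subtract a smooth bump $\psi$ with $\psi\equiv\delta_0$ on $E$ and support away from $\partial D$, at $\ld$-cost $O(\delta_0)$. Call the resulting smooth function $h$: it satisfies $h\le-\delta_0$ on $E$, $h>0$ on $\partial D$, and $\|h-g\|_{\ld}$ is as small as desired.

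Finally, the two structural conditions $|\nabla h|+|h|>0$ in $D$ and $h>0$ outside $\clos{\Omega_h}$ are exactly the statement that $0$ is a regular value of $h$, and this is the step I expect to be the conceptual crux. By Sard's theorem almost every level is regular for $h\in C^\infty(\bar D)$, so I would pick a regular value $\tau$ with $|\tau|<\min(\delta_0,P)$ and set $g_m:=h-\tau$. Then $\{g_m=0\}=\{h=\tau\}$ is a regular level set, whence $\nabla g_m\neq 0$ there and $|\nabla g_m|+|g_m|>0$ on all of $\bar D$; moreover a regular zero is a limit of points with $g_m<0$, so $\{g_m=0\}\subset\clos{\{g_m<0\}}=\clos{\Omega_{g_m}}$ and therefore $g_m>0$ outside $\clos{\Omega_{g_m}}$. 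The sign bookkeeping survives the shift because $g_m\le-\delta_0-\tau<0$ on $E$ (as $\tau>-\delta_0$) and $g_m\ge P-\tau>0$ on $\partial D$ (as $\tau<P$), while $\|g_m-h\|_{\ld}=|\tau|\,|D|^{1/2}$ is small; hence $g_m\in\FF_s\cap C^\infty(\bar D)$ with $g_m\to g$ in $\ld$ along a suitable sequence. The main obstacle is precisely reconciling the transversality requirement with the one-sided inequality constraints: Sard yields transversality only once one is free to shift the level, and that shift must be small enough not to undo the signs on $E$ and on $\partial D$ installed in the earlier steps; the second genuinely delicate point is the gluing across $\partial E$, which succeeds only because $|\partial E|=0$.
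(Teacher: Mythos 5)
Your proposal is correct and follows essentially the same route as the paper: a staged construction that first mollifies a modification of $\min\{g,0\}$ near $E$ to secure the sign constraint (relying on $\mu(\partial E)=0$), then lifts the function on a thin collar of $\partial D$, and finally invokes Sard's theorem to shift to a regular level so that $|\nabla g_m|+|g_m|>0$ and the positivity outside $\overline{\Omega_{g_m}}$ hold. The only substantive divergence is in the last step, where you deduce $g_m>0$ off $\overline{\Omega_{g_m}}$ directly from the fact that every regular zero is a limit of points where $g_m<0$ (at the price of a strict-negativity buffer on $E$ to permit a two-sided level shift), whereas the paper shifts downward by regular values $\delta_m$ and adds an auxiliary smooth function vanishing exactly on $\overline{\Omega_{g-\delta_m}}$ to the same effect.
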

\begin{remark}\label{rem:dense}
The assertion in Proposition \ref{dense} remains true if $\ld$ is replaced by $L^p(D)$, $p\in[1,\infty)$, see \eqref{c_m} and \eqref{lp} below. 

It has different applications for all sort of problems where the solution operator is defined on $\FF_{\mathfrak{s}}$ only, i.e., where conditions as those in Lemma \ref{fs} need to be fulfilled by the admissible shapes, see e.g.\,\cite{t_jde} and the references therein.
\end{remark}

The proof of Proposition \ref{dense} will be conducted in three steps, as follows.
We introduce the sets
\[\FF_c:=\{g \in  C^\infty( \bar D): g(x) \leq 0 \ \forall\,x\in E\},\]
\begin{equation}
\begin{aligned}
\FF_c^+:=\{g \in C^\infty(\bar D): g(x) \leq 0\ \forall\,x\in E, \ g(x)>0 \ \forall\,x\in\partial D\} \end{aligned}\end{equation}
and prove Proposition \ref{dense} by showing  the set of densities 
\begin{equation}
\begin{aligned}
\FF_c \dense \widetilde \FF \quad (\text{Lemma }\ref{lem_r} \text{ below}),
\\\FF_c^+ \dense\FF_c \quad (\text{Lemma }\ref{lem_1} \text{ below}),
\\\FF_{\mathfrak{s}} \cap C^\infty(\bar D) \dense\FF_c^+ \quad (\text{Lemma }\ref{lem_2} \text{ below})   \end{aligned}\end{equation}
w.r.t.\,the $\ld$ norm.

\begin{lemma}\label{lem_r}
For each $g \in \widetilde \FF$,  there exists a sequence $\{g_m \} \subset \FF_c$
so that $$g_m \to g \quad \text{in }\ld \quad \text{ as }m \to \infty.$$
\end{lemma}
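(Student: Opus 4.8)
The plan is to prove Lemma \ref{lem_r} by a standard density-plus-truncation argument, exploiting that the constraint $g \leq 0$ on $E$ is a one-sided (convex) pointwise condition that is stable under the natural mollification and truncation operations in $L^2$. Let $g \in \widetilde \FF$ be arbitrary, so $g \in L^2(D)$ and $g \leq 0$ a.e.\,in $E$. The difficulty is not the $L^2$-approximation by smooth functions per se (this is classical), but rather ensuring that the \emph{smooth} approximants continue to satisfy the constraint $g_m \leq 0$ \emph{everywhere} on $E$, not just almost everywhere; a naive mollification of $g$ need not respect the sign on $E$, because the mollifier averages over a neighborhood that may poke outside $E$ where $g$ is positive.

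First I would reduce to the case of functions already nonpositive in a neighborhood of $\clos E$. Write $g = g^+ - g^-$ and observe that it suffices to approximate the two pieces separately, or more cleanly, introduce a cutoff: since $E$ is a subdomain with $d(\clos E, \partial D)>0$ (Assumption \ref{assu:E}), there is an open set $E \subset V \subset\subset D$ and a smooth cutoff $\chi \in C^\infty_c(D)$ with $\chi \equiv 1$ on $E$. The idea is to modify $g$ on a thin collar so that the modified function is genuinely $\leq 0$ on an open neighborhood of $\clos E$ while staying close to $g$ in $L^2$. Concretely, I would first approximate $g$ in $L^2(D)$ by a smooth function $\tilde g_m$ (ordinary mollification, ignoring the constraint), which is possible because $C^\infty(\bar D)$ is dense in $L^2(D)$. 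Since the constraint set on $E$ is convex and closed in $L^2(E)$, the truncation $g \mapsto \min\{g,0\}$ is a contraction in $L^2$, so replacing $\tilde g_m$ by its negative part on $E$ only improves the $L^2$-distance to $g$ (which already satisfies $g\leq 0$ on $E$).

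The cleanest route, which I expect the author takes, is the second step made rigorous: define $g_m := \tilde g_m - C_m \,\chi_{E,m}$, where $\chi_{E,m}$ is a smooth function supported near $\clos E$ and $C_m$ is chosen large enough to force $g_m \leq 0$ on $E$, while keeping $\chi_{E,m}$ of small $L^2$-mass so that $\|C_m \chi_{E,m}\|_{L^2} \to 0$. Because $g \leq 0$ a.e.\,on $E$, the positive part of $\tilde g_m$ restricted to $E$ tends to $0$ in $L^2(E)$; thus one may take $C_m \chi_{E,m}$ to dominate $(\tilde g_m)^+$ on $E$ using a mollified indicator of $E$ at scale $1/m$, whose overshoot is controlled by $\|(\tilde g_m)^+\|_{L^\infty(E)}$ times the measure of the collar. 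The technical crux, and the step I expect to require the most care, is this simultaneous control: one must arrange that the correction term is both large enough pointwise on $E$ to restore nonpositivity and small enough in $L^2(D)$ to preserve convergence. This balance is where the hypothesis $g \leq 0$ \emph{a.e.}\,in $E$ (rather than an arbitrary sign) is essential.

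Finally, I would verify membership $g_m \in \FF_c$: smoothness is immediate since $\tilde g_m$ and the correction are both in $C^\infty(\bar D)$, and the only defining condition of $\FF_c$ beyond smoothness is $g_m(x) \leq 0$ for all $x \in E$, which the construction enforces by design. Collecting the estimates,
\[
\|g_m - g\|_{\ld} \leq \|\tilde g_m - g\|_{\ld} + \|C_m \chi_{E,m}\|_{\ld} \to 0 \quad \text{as }m \to \infty,
\]
which gives the claim. The whole argument is robust to replacing $\ld$ by $L^p(D)$ for $p \in [1,\infty)$, as anticipated in Remark \ref{rem:dense}, since both mollification and one-sided truncation are contractions in every $L^p$, $p<\infty$; this is presumably the reason the author records the stronger statement.
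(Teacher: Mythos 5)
You correctly isolate the real difficulty (a plain mollification of $g$ need not stay nonpositive at every point of $E$, because the mollifier averages over points outside $E$ where $g$ may be positive), but the fix you propose does not close the gap. You mollify first and then subtract a correction $C_m\chi_{E,m}$ with $C_m$ large enough that $C_m\chi_{E,m}\geq (\tilde g_m)^+$ pointwise on $E$, claiming the $L^2$-cost is ``$\|(\tilde g_m)^+\|_{L^\infty(E)}$ times the measure of the collar.'' The quantity $\|(\tilde g_m)^+\|_{L^\infty(E)}$ is not under control: for $x\in E$ within distance $1/m$ of $\partial E$ one only gets $(\tilde g_m)^+(x)\leq c\,m^2\int_{B(x,1/m)\setminus E}g^+$, and since $g$ is merely $L^2$ this can blow up. Concretely, take $g\equiv 0$ on $E$ and $g(x)=|x-x_0|^{-1}\bigl(\log(1/|x-x_0|)\bigr)^{-1}$ near a point $x_0\in\partial E$, outside $E$; this is in $L^2(D)$, yet $(\tilde g_m)^+(x_0)\gtrsim m/\log m$, so $C_m^2\,\mu(\text{collar})\gtrsim m(\log m)^{-2}\to\infty$ and the correction term is not small in $\ld$. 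The hypothesis $g\leq 0$ a.e.\,in $E$ gives no \emph{rate} for $\|g^+\|_{L^2(E_{1/m}\setminus E)}\to 0$, so no choice of constant $C_m$ and collar width can simultaneously enforce the pointwise domination and the $L^2$-smallness.

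The paper avoids this by reversing the order of the two operations: it first truncates on an \emph{enlarged} set, setting $\widehat g_m:=\min\{g,0\}$ on $E_m:=\{v:\,d(v,E)\leq 1/m\}$ and $\widehat g_m:=g$ elsewhere. This costs only $\|g^+\|_{L^2(E_m\setminus E)}\to 0$, by absolute continuity of the integral of $(g^+)^2$ over the shrinking set $E_m\setminus E$ (here the a.e.\,sign condition on $E$ enters, and no rate is needed). It then mollifies $\widehat g_m$ at scale $1/m$: since $\widehat g_m\leq 0$ a.e.\,on the full $1/m$-neighborhood $E_m$ of $E$, the mollified function is automatically $\leq 0$ at \emph{every} point of $E$, so membership in $\FF_c$ is immediate and no correction term is required. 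If you want to salvage your mollify-first strategy, you would have to replace the constant $C_m$ by a correction adapted to $(\tilde g_m)^+$ itself, which essentially reproduces the truncate-then-mollify argument.
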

\begin{proof}
Let $g \in \ld $ with $g \leq 0$ a.e.\,in $E$ be arbitrary, but fixed and denote by $\chi_D g:\R^2 \to \R$ the extension of $g$ by zero outside $D$. 
For every $m \in \N$ we define the $\ld$ function $\widehat g_m:\R^2 \to \R$ as 
\begin{equation}
 \widehat g_m:=\left\{
\begin{aligned}
\min\{g,0\} &\quad \text{on } E_m,
\\\chi_D g &\quad \text{otherwise},
\end{aligned}\right.
\end{equation}where 
$$E_m:=\{v \in \R^2: d(v, E) \leq 1/m\},$$and  $d$ is the distance induced by the $l_\infty$ norm; note that $E_m \subset D$ for $m$ large enough, by Assumption \ref{assu:E}. Since $g\leq 0$ a.e.\,in $E$, we have 
\begin{equation}\label{c_m}
\|\widehat g_m-g\|_{\ld}^2=\|\min\{g,0\} -g\|_{L^2(E_m \setminus E)}^2=\int_{E_m \setminus E}(-\max\{g,0\})^2  \to 0 \quad \text{as }m \to \infty, \end{equation}since $(\max\{g,0\})^2 \in L^1(D)$ and $\mu (E_m \setminus E) \to 0 \text{ as }m \to \infty,$ cf.\,e.g.\,\cite[Lemma A.1.17]{alt}.
Now we define the $C^{\infty}$ function $g_m:\R^2 \to \mathbb{R}$ as
\[g_m(v):=\int_{\R^2} \widehat g_m(v- \frac{1}{m} s)\psi(s)\,ds,\]
where $\psi \in C_c^\infty(\mathbb{R}^2),\ \psi \geq 0, \ \supp \psi \subset [-1,1] ^2$ and $\int_{\R^2} \psi(s)\, ds=1$. 
Then, by the definition of $E_m$ and $\widehat g_m$, it holds $$g_m(v)=\int_{[-1,1]^2} \widehat g_m(v- \frac{1}{m} s)\psi(s)\,ds \leq 0 \quad \forall\, v \in E,$$ i.e., $g_m  \in \FF_c$.
In view of \eqref{c_m}, the proof is now complete.
\end{proof}

\begin{lemma}\label{lem_1}
For each $g \in  \FF_{c}, $ there exists a sequence $\{g_m \} \subset \FF_c^+,$
so that $$g_m \to g \quad \text{in }\ld \quad \text{ as }m \to \infty.$$
\end{lemma}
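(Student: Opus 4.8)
The only gap between $\FF_c$ and $\FF_c^+$ is the extra requirement $g>0$ on $\partial D$, so the plan is to add to $g$ a nonnegative smooth perturbation that lifts it strictly above zero on $\partial D$, vanishes identically on $E$, and still tends to $0$ in $\ld$. Since $g \in C^\infty(\clos D)$ and $\clos D$ is compact, set $K:=\max_{\clos D}|g|<\infty$. Note that a perturbation of fixed profile and fixed support can never converge to $0$ in $\ld$, while a perturbation of shrinking amplitude cannot be guaranteed to dominate $-g$ on $\partial D$; hence I would use a perturbation of \emph{fixed height but support concentrating at $\partial D$}, realized most cleanly by a power trick.

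First I would produce a single smooth cut-off profile $\omega \in C^\infty(\clos D)$ with $0 \le \omega \le 1$, $\omega \equiv 1$ on $\partial D$, $\omega < 1$ on $D$, and $\omega \equiv 0$ on $\clos E$. Such an $\omega$ can be assembled from two standard objects. The first is a smooth Urysohn function $\zeta \in C^\infty(\R^2)$ with $0\le \zeta \le 1$, $\zeta \equiv 1$ on a neighbourhood of $\clos E$ and $\zeta \equiv 0$ on a neighbourhood of $\partial D$; this exists because $\clos E$ and $\partial D$ are disjoint compacta by Assumption \ref{assu:E}. The second is a smooth function $G \in C^\infty(\R^2)$ with $G \ge 0$ whose zero set is exactly $\partial D$ (a classical result of Whitney on smooth functions with prescribed zero set), normalized so that $0 \le G \le 1$ on $\clos D$. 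Setting $\omega := (1-G)(1-\zeta)$ then gives $\omega = 1$ on $\partial D$ (where $G=0$ and $\zeta=0$), $\omega = 0$ on $\clos E$ (where $\zeta=1$), and $\omega \le 1-G < 1$ throughout $D$ since $G>0$ on $D$. I would then define, for $m \in \N$,
\[
 g_m := g + (K+1)\,\omega^m .
\]

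It remains to verify the three defining properties of $\FF_c^+$ and the convergence. Each $g_m$ is smooth on $\clos D$; on $E$ one has $\omega=0$, whence $g_m = g \le 0$; on $\partial D$ one has $\omega^m = 1$, whence $g_m = g + K + 1 \ge 1 > 0$. Thus $g_m \in \FF_c^+$. For the convergence, $g_m - g = (K+1)\,\omega^m$ with $0 \le \omega^m \le 1$ and $\omega^m \to 0$ pointwise on $D$ (as $0 \le \omega < 1$ there), so dominated convergence on the bounded set $D$ yields $\|g_m - g\|_{\ld}^2 = (K+1)^2 \int_D \omega^{2m}\,dx \to 0$, which closes the argument.

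The only genuinely delicate point is the construction of the profile $\omega$: reconciling $\omega \equiv 1$ on $\partial D$ with $\omega < 1$ in the open interior, \emph{smoothly}, is not automatic, because $D$ is merely of class $C^{1,1}$, so $\partial D$ is not a smooth level set and the distance $d(\cdot,\partial D)$ is only $C^{1,1}$ near the boundary. Invoking a $C^\infty$ function with prescribed zero set $\partial D$ circumvents this regularity issue cleanly; an alternative would be to build a smooth cut-off supported in a shell $\{d(\cdot,\partial D)<1/m\}$ by mollifying the (only $C^{1,1}$) distance function, which is more cumbersome. A secondary point worth recording is that $\partial D$ has Lebesgue measure zero, so that modifying $g$ only in a thin neighbourhood of $\partial D$ is harmless for the $\ld$-limit.
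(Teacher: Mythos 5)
Your proof is correct, and the overall strategy is the one the paper uses: add to $g$ a nonnegative smooth perturbation that vanishes on $E$, dominates $-\min_{\partial D}g$ on $\partial D$, and tends to $0$ in $\ld$. The difference lies in how the perturbation is made small. The paper takes a sequence of bumps $h_m$ of essentially fixed height supported in the shell $\{x\in D:\ 0<d(\partial D,x)<1/m\}$, so that $\|h_m\|_{\ld}\to 0$ simply because the measure of the shell vanishes; the existence of each smooth $h_m$ with the prescribed boundary value and support is asserted rather than constructed. You instead fix a single profile $\omega$ (assembled from a Urysohn function and a Whitney function whose zero set is exactly $\partial D$) and take powers $\omega^m$, getting the $\ld$-convergence from the pointwise decay $\omega^m\to 0$ on the open set $D$ (where $\omega<1$) together with dominated convergence. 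Your route is more explicit about where the smooth ingredients come from -- in particular, it cleanly sidesteps the fact that $D$ is only of class $C^{1,1}$, so the distance function cannot be used directly as a smooth defining function -- at the price of invoking Whitney's theorem, a heavier tool than the problem requires; the paper's shell construction is more elementary but leaves the smooth cut-off implicit. The verification of the two membership conditions ($g_m\le 0$ on $E$ for the perturbation vanishes there, and $g_m>0$ on $\partial D$ because the added height exceeds $\max_{\bar D}|g|$) is essentially identical in both arguments.
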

\begin{proof}We recall that
\[\FF_c:=\{g \in  C^\infty( \bar D): g(x) \leq 0 \ \forall\,x\in E\}\]and 
\begin{equation}
\begin{aligned}
\FF_c^+:=\{g \in C^\infty(\bar D): g(x) \leq 0\ \forall\,x\in E, \ g(x)>0 \ \forall\,x\in \partial D\} . \end{aligned}\end{equation}

Let $g \in  \FF_{c}$ be arbitrary but fixed and $m \in \N$. 
We define $h_m \in C^\infty(\bar D)$ as
%\begin{equation}
%h_m:=\left\{
%\begin{aligned}
%2|\min_{x \in D} g| &\quad \text{for $x \in \clos D$ where }d(\partial D,x) \leq 1/2m,
%\\\in (0,2|\min_{x \in D}g|)&\quad \text{for $x \in \clos D$ where }d(\partial D,x) \in (1/2m,1/m),
%\\0 &\quad \text{for $x \in \clos D$ where }d(\partial D,x) \geq 1/m.
%\end{aligned}\right.
%\end{equation}
\begin{equation}\label{hm}
h_m(x):=\left\{
\begin{aligned}
-2\min\{\min_{x \in \partial D} g,0\} +1/m  &\quad \text{for $x \in \partial D$},
\\\in [0,-2\min\{\min_{x \in \partial D} g,0\}  +1/m]&\quad \text{for $x \in  D$ where }d(\partial D,x) \in (0,1/m),
\\0 &\quad \text{for $x \in  D$ where }d(\partial D,x) \geq 1/m.
\end{aligned}\right.
\end{equation}
%We also define $f_m \in C^\infty(\bar D)$ as \begin{equation}\label{fm}
%f_m:=\left\{
%\begin{aligned}
%\in (0,1) &\quad \text{for $x \in  D \setminus \overline {\O_{g+h_m}}$},
%\\0 &\quad \text{for $x \in \overline {\O_{g+h_m}}$}.
%\end{aligned}\right.
%\end{equation}Note that such a function exists since we can choose a compact subset of $\R^2$, disjoint with $\bar D$, so that $f_m=1$ there. Recall that, according to \eqref{def:og}, 
%\[\O_{g+h_m}=\interior\{x \in D:g(x)+h_m(x) \leq 0\}.\]
Then, the mapping \begin{equation}\label{gm}
g_m:=g+h_m \in C^\infty(\bar D) \end{equation} 
satisfies
\begin{equation}\label{lp}
\|g_m-g\|_{\ld}=  \|h_m\|_{L^2(D_m)} \to 0 \quad \text{as }m \to \infty, \end{equation}
where we abbreviate  \[D_m:=\{ x \in  D: d(\partial D,x) \in(0, 1/m)\}.\]
Let us now check if $g_m$ fulfils the inequalities appearing in the definition of $\FF_c^+$.
\\\textit{(i) $g_m \leq 0$ in $E$}:  By Assumption \ref{assu:E},  we can choose $m$ large enough so that $1/m <d(\bar E, \partial D)$. This implies that 
$h_m=0 \text{ in }\bar E$,
which gives in turn  $g_m=g\leq 0$ in $\bar E$.
\\\textit{(ii) $g_m>0$ on $\partial D:$} 
By \eqref{gm}, we have for $x \in \partial D$:
\[ g_m(x)\geq \min\{\min_{x \in \partial D} g,0\} -2\min\{\min_{x \in \partial D} g,0\} +1/m=-\min\{\min_{x \in \partial D} g,0\} +1/m\geq 1/m,\]
where for the first estimate we used the definition of $h_m$, i.e., \eqref{hm}. This allows us to conclude the desired assertion.
\end{proof}

%In the proof of the next lemma we are going to use the fact that, for an open set $A\subset \R^2$, it holds
%\begin{equation}\label{int}\clos A=\overline{ \interior {\clos A}}.\end{equation}
%This assertion can be easily checked, and it is true for general topological spaces.

\begin{lemma}\label{lem_2}
For each $g \in  \FF_{c}^+, $ there exists a sequence $\{g_m \} \subset  \FF_{\mathfrak{s}} \cap C^\infty(\bar D),$
so that $$g_m \to g \quad \text{in }C(\bar D) \quad \text{ as }m \to \infty.$$
\end{lemma}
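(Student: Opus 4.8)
The plan is to produce $g_m$ by shifting $g$ \emph{downward} by a small positive constant chosen so that the new zero level is a regular level. Since $g \in \FF_c^+ \subset C^\infty(\bar D)$ with $\bar D \subset \R^2$, Sard's theorem (applicable because $g$ is $C^\infty$ and the target is one-dimensional) guarantees that the set of critical values of $g$ over $D$ has Lebesgue measure zero; in particular its regular values are dense in $\R$. Because $g>0$ on the compact set $\partial D$ we have $\min_{\partial D} g>0$, so for each $m \in \N$ I would select a regular value $c_m$ of $g$ with $0<c_m<\min\{1/m,\min_{\partial D} g\}$ and set $g_m:=g-c_m \in C^\infty(\bar D)$. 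Then $c_m \to 0$ gives at once $\|g_m-g\|_{C(\bar D)}=c_m\to 0$, so the only remaining task is to check that each $g_m$ satisfies the four defining inequalities of $\FF_s$.

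The sign conditions survive the downward shift. On $E$ one has $g\leq 0$ and $c_m>0$, hence $g_m=g-c_m<0$; on $\partial D$ the bound $c_m<\min_{\partial D}g$ gives $g_m>0$. For the gradient condition I would note that
\[
\{x \in D:g_m(x)=0\}=\{x \in D:g(x)=c_m\},
\]
and since $c_m$ is a regular value of $g$, every point of this set satisfies $\nabla g_m=\nabla g\neq 0$, so $|\nabla g_m(x)|+|g_m(x)|>0$ for all $x\in D$.

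It then remains to verify $g_m>0$ on $D\setminus\clos{\O_{g_m}}$, and this is exactly where \eqref{int} enters. Because $0$ is a regular value of $g_m$, every zero of $g_m$ is a limit of points where $g_m<0$ (move from it in the direction $-\nabla g_m$), which yields $\{g_m\leq 0\}=\clos\{g_m<0\}$. Applying \eqref{int} to the open set $A:=\{g_m<0\}$ then gives
\[
\{g_m\leq 0\}=\clos A=\overline{\interior\clos A}=\overline{\interior\{g_m\leq 0\}}=\clos{\O_{g_m}}.
\]
Hence $D\setminus\clos{\O_{g_m}}=\{g_m>0\}$, so $g_m>0$ there, and we conclude $g_m\in\FF_s\cap C^\infty(\bar D)$.

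The substantive point, and the only real obstacle, is the gradient condition: a generic $g\in\FF_c^+$ may have a badly degenerate zero set (even a plateau at level $0$), so one cannot hope to keep the same zero set and must instead pass to a nearby regular level, which is precisely what Sard's theorem supplies. The sign requirements are then automatically preserved because only a constant is subtracted, the single caveat being to keep the shift below $\min_{\partial D}g$ so that boundary positivity is not destroyed; with the regular-value property in hand, the last condition follows cheaply from the elementary identity \eqref{int}.
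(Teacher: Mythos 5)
Your construction is correct, and it shares the paper's central idea: use Sard's theorem to pick regular values $c_m\searrow 0$ below $\min_{\partial D}g$ and shift $g$ downward by $c_m$, so that the sign conditions on $E$ and $\partial D$ and the gradient condition on the new zero level come for free. Where you genuinely diverge is the last condition in \eqref{f_s}, positivity outside $\clos{\O_{g_m}}$: the paper does not try to get this from $g-\delta_m$ alone but adds a correction $\frac{1}{m}f_m$, with $f_m\in C^\infty(\bar D)$ vanishing on $\overline{\O_{g-\delta_m}}$ and strictly positive outside, so that the condition holds by construction (and the gradient condition is untouched since $\nabla f_m=0$ on $\overline{\O_{g_m}}=\overline{\O_{g-\delta_m}}$). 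You instead exploit that $0$ is a regular value of $g_m=g-c_m$, so every zero of $g_m$ is approached from $\{g_m<0\}$, which gives $\{g_m\le 0\}=\clos{\{g_m<0\}}$ and then, via \eqref{int}, $\{g_m\le 0\}=\clos{\O_{g_m}}$, i.e.\ $g_m>0$ on the complement. Both routes are sound; yours shows that the auxiliary bump is dispensable and produces the cleaner approximant $g-c_m$, while the paper's version enforces strict exterior positivity by brute force without needing the level-set characterization of $\clos{\O_{g_m}}$. One small point worth making explicit in your write-up: the descent argument at a zero $x_0$ requires the points $x_0-t\nabla g_m(x_0)$ to stay in $D$, which holds because $g_m>0$ near $\partial D$ forces all zeros of $g_m$ to lie in the open set $D$.
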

\begin{proof} We recall the definitions of $\FF_c^+$ and $  \FF_{\mathfrak{s}}:$ \begin{equation}
\begin{aligned}
\FF_c^+:=\{g \in C^\infty(\bar D): g(x) \leq 0\ \forall\,x\in E, \ g(x)>0 \ \forall\,x\in \partial D\} , \end{aligned}\end{equation}
\begin{equation}
\begin{aligned}
 \FF_{\mathfrak{s}}&:=\{g \in C^2(\bar D):g(x) \ro{<} 0\ \forall\,x\in E, \ |\nabla g(x)|+|g(x)|>0 \ \forall\,x\in D, \ g(x)>0  \ \forall\,x\in \partial D\} . \end{aligned}\end{equation}
 For the  definition of $\O_g$, see \eqref{def:og}.
 Let $g \in  \FF_{c}^+$ be arbitrary but fixed. 
According to Sard's theorem {\cite[Thm.\,7.2]{sard}}, the set 
\begin{equation}\label{gm0}
\{g(x) : \nabla g(x)=0\}\subset \R \ \text{ has measure zero.} \end{equation}Since $g$ is continuous and $g>0$ on $\partial D$, it holds $\min_{x \in \partial D}g(x)>0$. In light of \eqref{gm0}, there exists $\delta_1 \in (0,\min_{x \in \partial D}g(x))$ so that
\[\nabla g(x)\neq 0 \quad \forall\,x \in g^{-1}(\delta_1).\]
Note that $g^{-1}(\delta_1)$ may be empty, in which case the above assertion is still valid.
Sard's theorem, see \eqref{gm0}, further ensures the existence of a sequence $\{\delta_m\} $ satisfying \[\delta_m \in (0,\min\{\delta_{m-1},\frac{1}{m}\}), \quad m\in \N\]and 
\begin{equation}\label{ngm}
\nabla g(x)\neq 0 \quad \forall\,x \in g^{-1}(\delta_m) \quad \forall\,m \in \N.\end{equation}
We notice that the above constructed sequence satisfies\begin{equation}\label{delta}
\min_{x \in \partial D}g(x)>\delta_1>\delta_2>...>\delta_m>0, \quad \delta_m \to 0 \quad \text{as }m \to \infty.\end{equation}
Further, we define $f_m \in C^\infty(\bar D)$ as \begin{equation}\label{fm'}
f_m:=\left\{
\begin{aligned}
\in (0,1] &\quad \text{for $x \in  \bar D \setminus \overline {\O_{g-\delta_m}}$},
\\0 &\quad \text{for $x \in \overline {\O_{g-\delta_m}}$},
\end{aligned}\right.
\end{equation}where we recall that, according to \eqref{def:og}, the set $\O_{g-\delta_m}$ is defined as
\begin{equation}\label{ogdm}
\O_{g-\delta_m}=\interior\{x \in D:g(x)-\delta_m \leq 0\}.\end{equation}
Note that  functions $f_m$ as in \eqref{fm'} exist, since we can choose a compact subset of $\R^2$, disjoint with $\bar D$, so that $f_m=1$ there. 
Then, the mapping \begin{equation}\label{gm'}
g_m:=g-\delta_m +\frac{1}{m}f_m\in C^\infty(\bar D) \end{equation} 
satisfies
\begin{equation}
\|g_m-g\|_{C(\bar D)}  \to 0 \quad \text{as }m \to \infty,\end{equation}see \eqref{delta}.
Let us now check if $g_m$ fulfils the inequalities appearing in the definition of $ \FF_{\mathfrak{s}}$.
\\\textit{(i) $g_m \ro{<} 0$ in $E$}:  Since $g\in \FF_c^+,$ it holds $g\leq 0$ in $\bar E$, which implies that 
$$\bar E \subset \{x \in D:g(x)-\delta_m \ro{<} 0\},$$ \ro{from which }we infer $$\bar E \subset \overline {\O_{g-\delta_m}}.$$ This yields $f_m=0$ in $\bar E$, see \eqref{fm'}. Thus, by \eqref{gm'}, $g_m=g -\delta_m \leq \ro{-\delta_m <0}$ in $\bar E$.
\\\textit{(ii) $g_m>0$ on $\partial D:$} 
From \eqref{delta} one deduces that for $x \in \partial D$ it holds 
\[ g(x)-\delta_m >g(x)-\min_{x \in \partial D} g \geq 0.\]
From \eqref{gm'} and \eqref{fm'} we now conclude that $g_m>0$ on $\partial D.$
%\\\textit{(iii) $g_m(x)>0 \ \forall\,x\in \bar D \setminus \overline {\O_{g_m}}:$} 
%By \eqref{gm'} and \eqref{fm'} one knows that
%\begin{equation}\label{1}g_m(x)>0 \ \forall\,x\in \bar D \setminus \overline {\O_{g-\delta_m}} .\end{equation}
%Next we prove the identity
%\begin{equation}\label{2}
%\overline {\O_{g-\delta_m}}=\overline {\O_{g_m}}.\end{equation}
%Let $x \in \overline {\O_{g-\delta_m}}.$ Then, by \eqref{ogdm}, \eqref{fm'} and \eqref{gm'}, we have that $x\in D$, $g(x)-\delta_m=g_m (x) \leq 0;$ note that $x \not \in \partial D$, due to \eqref{delta}, which implies $g>\delta_m$ on $\partial D$. Therefore,
%\begin{equation}\label{4}
%\overline {\O_{g-\delta_m}} \subset \{x \in  D:g_m(x) \leq 0\}\subset \{x \in D: g(x)-\delta_m \leq 0\} .\end{equation}
%Hence,
%\begin{equation}\label{3}
%\overline {\interior \overline {\O_{g-\delta_m}}} \subset \overline{\O_{g_m}} \subset \overline {\O_{g-\delta_m}} , \end{equation}see \eqref{def:og}. From \ro{$\overline {\O_{g-\delta_m}} \subset \overline {\interior \overline {\O_{g-\delta_m}}} $ and \eqref{3} we infer  \eqref{2}}. Thanks to \eqref{1}, this proves the desired assertion.
\\\textit{(iii)}\ro{ $|\nabla g_m(x)|+|g_m(x)|>0 \ \forall\,x\in  D$: By \eqref{gm'} and \eqref{fm'} one knows that
\begin{equation}\label{1}g_m(x)>0 \ \forall\,x\in \bar D \setminus \overline {\O_{g-\delta_m}} .\end{equation}Thus, we only need to show that 
\[|\nabla g_m(x)|+|g_m(x)|>0 \ \forall\,x\in  \overline{\O_{g-\delta_m}}.\]Note that, due to the regularity and the definition of $f_m$, see \eqref{fm'}, its gradient vanishes  on $\overline{\O_{g-\delta_m}}$. This means that, by \eqref{gm'}, it holds 
\[g_m=g-\delta_m \text{ in }\overline{\O_{g-\delta_m}}, \quad \nabla g_m=\nabla g \quad \text{ in }\overline{\O_{g-\delta_m}}.\]
Now, thanks to \eqref{ngm}, $g_m(x)=0$ implies $\nabla g_m(x)\neq 0,$ if $x\in \overline{\O_{g-\delta_m}}.$ This completes the proof.}
\end{proof}

\section{The approximating optimal control problem}\label{sec:3}
In this section,  $\bar g_{\mathfrak{s}} \in \FF_{\mathfrak{s}}$ is a local optimum of \eqref{p_shh} in the $\ld$-sense. 
Inspired by a classical adapted penalization scheme  \cite{barbu84} in combination with the fixed domain methodology from \cite{nt}, our approximating minimization problem \eqref{p10} preserves the non-smoothness. We point out  that the non-convex set $\FF_{\mathfrak{s}}$ from \eqref{p_shh} is now replaced by a convex subset of a Hilbert space. As shown by one of the main results in the next section (Theorem \ref{thm:cor}), this replacement is reasonable thanks to the findings from the previous section (Proposition \ref{dense}).

Let $\eps>0$ be fixed. We consider the following approximating control problem\begin{equation}\tag{$P_\eps$}\label{p10}
 \left.
 \begin{aligned}
  \min_{g \in   \FF} \quad & \int_E (y(x)-y_d(x))^2 \; dx+\alpha\,\int_{D} (1-H_\eps(g))(x)\,dx+\frac{1}{2}\,\|g-\bar g_{\mathfrak{s}}\|_{\WW}^2  \\   \text{s.t.} \quad & 
  \begin{aligned}[t]
   -\laplace y + \beta(y)+\frac{1}{\eps}H_\eps(g)y&=f +\eps g\quad \text{in }D,
   \\y&=0\quad \text{ on } \partial D, \end{aligned} 
 \end{aligned}
 \quad \right\}
\end{equation}where $\WW$ is the Hilbert space $\ld \cap \hde$, $s\in(1,2]$, endowed with the norm 
\[\|\cdot\|_{\WW}^2:=\|\cdot\|_{\ld}^2+\|\cdot\|_{H^s(D\setminus \bar E)}^2,\]
and 
\begin{equation}\label{f_tilde}
 \FF:=\{g \in \WW: g \leq 0\text{ a.e.\,in }E\}.
\end{equation}

\begin{definition}The non-linearity $H_\eps:\R \to [0,1]$ is defined as follows 
 \begin{equation}\label{reg_h}
H_\eps(v):=\left\{\begin{aligned}0,&\quad \text{if }v\leq 0,
\\\frac{v^2(3\eps-2v)}{\eps^3},&\quad \text{if }v\in (0,\eps),
\\1,&\quad \text{if }v\geq \eps.
\end{aligned}\right.\end{equation}
\end{definition}

%For later purposes, we compute here its derivative:
 %\begin{equation}\label{reg_h'}
%H_\eps'(v):=\left\{\begin{aligned}0,&\quad \text{if }v\leq 0,
%\\\frac{6v(\eps-v)}{\eps^3},&\quad \text{if }v\in (0,\eps),
%\\0,&\quad \text{if }v\geq \eps.
%\end{aligned}\right.\end{equation}

\begin{remark}The mapping $H_\eps$ introduced in \eqref{reg_h}  is Lipschitz continuous and continuously differentiable.
It is obtained as a regularization of the Heaviside function from \eqref{h}
and it is one possible choice that has the aforementioned properties. Note that Heaviside functions and their regularizations play an essential role  in the context of shape optimization via fixed domain approaches, see for instance \cite{nt, nt2} and the references therein. 
\end{remark}
%\begin{remark}[Strong stationary optimality conditions]\label{rem:ss}
%Though non-smooth, the structure of \eqref{p10} is such that it allows for the derivation of an optimality system that is equivalent to the first-order necessary optimality condition in primal form (strong stationary optimality conditions). As we deal with control constraints in $\FF$, one would expect the necessity of a so-called 'constraint qualification' (assumptions on the unknown local optimizer) \cite{mcrf, wachsm_2014}. However, this is not required, if $\beta$ is convex around its non-differentiable points {\cite{p2}}. This is owed to the fact that we consider $+\eps g$, not $-\eps g$ on the right hand side of the state equation.  
%\end{remark}
\begin{remark}[Additional term in the state equation]
We notice that the state equation in \eqref{p10} is an approximating extension of the state equation in \eqref{p_shh} from $\O_g$ to $D$ \cite{nt}. We point out the presence of the additional term $+\eps g$ on the right hand side, which is essential for the proof of our main strong stationarity result in {\cite{p2}}, see \cite[Rem.\,3.16]{p2} for details. In all the other upcoming investigations, this can be dropped.
\end{remark}

\begin{remark}[Objective in \eqref{p10}]
The presence of the $H^s$ term in the objective of \eqref{p10} ensures the existence of optimal solutions. We underline that it is necessary to consider the $H^s(D\setminus \bar E)$ norm (not the full $H^s(D)$ norm), in order to conclude complete (i.e., strong stationary) optimality conditions for local optima of  \eqref{p10}, see {\cite{p2}}. If $\WW=H^s(D)$, certain sign conditions for the adjoint state are not  available, and the optimality system for \eqref{p10} reduces to a weaker one, namely the limit optimality system obtained via smoothening methods.
\\Moreover, it is essential that $s>1$ in order to obtain the convergence of the states in Theorem \ref{thm:cor}, see also Lemma \ref{lem:cp}.  This condition ensures that the embedding $\hde \embed L^\infty(D \setminus \bar E)$ is true \cite[p.88]{kaballo}. It is also necessary when deriving the final optimality system for the optimal shape of \eqref{p_sh} in \cite{p3}. On the other side, the upper bound $s\leq 2$ is only needed to guarantee the inclusion $\FF_\mathfrak{s} \subset \WW$, so that the last term   in the objective of \eqref{p10} is well-defined, since $\bar g_\mathfrak{s} \in \FF_\mathfrak{s}.$ \end{remark}

\subsection{Solvability of the state equations in \eqref{p10} and \eqref{p_shh}}\label{sec:solv}
We start this subsection with a result on the unique solvability of the state equation in the approximating control problem. For convenience, we recall  here the governing PDE in \eqref{p10}:
\begin{equation}\label{eq}
  \begin{aligned}
   -\laplace y + \beta(y)+\frac{1}{\eps}H_\eps(g)y&=f +\eps g\quad \text{in }D,
   \\y&=0\quad \text{on } \partial D.
 \end{aligned}
\end{equation}

\begin{lemma}\label{lem:S}
For any right-hand side $g \in L^2(D)$, the equation \eqref{eq} admits a unique solution $ y \in H^1_0(D) \cap H^2(D)$. 
This satisfies $$\|y\|_{H^1_0(D) \cap C(\bar D)} \leq c_1+c_2\,\|g\|_{\ld},$$where $c_1,c_2>0$ are independent of $\eps$, $\beta$, and  $g$.
The control-to-state mapping $S_\eps:L^2(D) \to H^1_0(D) \cap H^2(D)$ is  Lipschitz continuous on bounded sets, i.e.,
for every  $M > 0$, 
  there exists $L_{M,\eps} > 0$ so that
  \begin{equation}\label{slip}
   \|S_\eps(g_1) - S_\eps(g_2)\|_{H^1_0(D) \cap H^2(D)} \leq L_{M,\eps} \, \|g_1 - g_2\|_{L^2(D)} \quad \forall\, g_1,g_2 \in \clos{B_{\ld}(0,M)}.
  \end{equation}

\end{lemma}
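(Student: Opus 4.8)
The plan is to establish existence and uniqueness by the theory of monotone operators, to obtain the claimed a priori bound by a Stampacchia truncation argument that crucially exploits the \emph{signs} of the two nonlinear terms, and finally to bootstrap to $H^2$-regularity and to Lipschitz dependence via elliptic regularity on the $C^{1,1}$-domain $D$.

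For existence I would first observe that, since $\beta$ is only locally Lipschitz, the Nemytskii operator $y \mapsto \beta(y)$ need not map $\hd$ boundedly into $H^{-1}(D)$, so I would truncate: for $k \in \N$ set $\beta_k(s):=\beta(\max\{-k,\min\{s,k\}\})$, which is bounded, globally Lipschitz and still monotone increasing. The operator $\mathcal A_k:\hd \to H^{-1}(D)$ given by $\langle \mathcal A_k(y),v\rangle = \int_D \nabla y\cdot\nabla v + \int_D \beta_k(y)v + \frac1\eps\int_D H_\eps(g)\,yv$ is then bounded (using $0\le H_\eps\le 1$), hemicontinuous and coercive, and it is monotone because $\beta_k$ is increasing and $H_\eps(g)\ge 0$ pointwise. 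The Browder--Minty theorem yields a unique $y_k \in \hd$ solving the truncated equation. Uniqueness for the original problem follows along the same lines: subtracting two solutions, testing with their difference, and using monotonicity of $\beta$ together with $H_\eps(g)\ge0$ forces the gradient of the difference to vanish, whence equality by Poincar\'e's inequality.

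The heart of the matter is the a priori estimate, which I would organize so as to be uniform in $k$, in $\eps$ and in the Lipschitz modulus of $\beta$. Testing the truncated equation with $(y_k-\kappa)^+$ for $\kappa\ge 0$, on the set $\{y_k>\kappa\ge0\}$ one has $y_k>0$, hence $(\beta_k(y_k)-\beta_k(0))(y_k-\kappa)^+\ge 0$ and $H_\eps(g)\,y_k(y_k-\kappa)^+\ge 0$; both nonnegative nonlinear contributions are discarded, and the symmetric computation with the negative part on $\{y_k<-\kappa\}$ controls the other side. What remains is $\int_D|\nabla(y_k-\kappa)^+|^2 \le \int_D (f+\eps g-\beta(0))(y_k-\kappa)^+$, to which Stampacchia's lemma applies since $n=2$ and $\ld\hookrightarrow L^p(D)$ for $1<p\le 2$ suffices. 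This produces $\|y_k\|_{C(\bar D)}=\|y_k\|_{\li}\le c_1+c_2\|g\|_{\ld}$, with constants depending only on the fixed data $f,\beta(0),D$ and on $\eps\le 1$ through $c_2$, but \emph{independent} of the local Lipschitz constants of $\beta$, precisely because only monotonicity was used to drop the $\beta$-term; a plain test with $y_k$ gives the matching $\hd$-bound. As this bound $R:=c_1+c_2\|g\|_{\ld}$ is independent of $k$, choosing $k>R$ makes the truncation inactive ($\beta_k(y_k)=\beta(y_k)$), so $y_k$ solves \eqref{eq}. Writing \eqref{eq} as $-\Delta y = f+\eps g-\beta(y)-\frac1\eps H_\eps(g)y=:F$ with $y\in\li$, $\beta$ bounded on bounded sets and $0\le H_\eps\le1$, gives $F\in\ld$, so elliptic regularity on the $C^{1,1}$-domain $D$ yields $y\in H^1_0(D)\cap H^2(D)$.

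For the Lipschitz estimate \eqref{slip} on $\clos{B_{\ld}(0,M)}$ I would subtract the equations for $y_i=S_\eps(g_i)$, test with $y_1-y_2$, and split $H_\eps(g_1)y_1-H_\eps(g_2)y_2=H_\eps(g_1)(y_1-y_2)+(H_\eps(g_1)-H_\eps(g_2))y_2$, keeping the first (nonnegative) piece on the left and estimating the second via the Lipschitz continuity of $H_\eps$ and the uniform $\li$-bound $R(M)$ on $y_2$; discarding the $\beta$-difference by monotonicity and invoking Poincar\'e yields an $\hd$-Lipschitz estimate, which I then upgrade to $H^2(D)$ by bounding $\|F_1-F_2\|_{\ld}$ exactly as in the regularity step. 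The main obstacle, and the point demanding genuine care, is arranging the $L^\infty$ a priori bound so that it is authentically independent of $\eps$ and of the Lipschitz behaviour of $\beta$: this is what closes the truncation scheme and what the $\eps$-uniform constants in the statement encode, and it is what will later permit a passage to the limit $\eps\to0$.
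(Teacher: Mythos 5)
Your argument is correct, and for the first half of the lemma it is genuinely more self-contained than the paper's. The paper dispatches existence, uniqueness and the $\eps$-uniform bound in $\hd\cap C(\bar D)$ in one line by citing Tr\"oltzsch's Theorem~4.8 on semilinear elliptic equations with monotone nonlinearities (noting that $0\le H_\eps(g)\le 1$ and $H_\eps(g)y$ is monotone in $y$), and then invokes Gilbarg--Trudinger for $H^2$; you instead rebuild that black box from scratch via truncation of $\beta$, Browder--Minty for the truncated problems, and a Stampacchia iteration whose constants you correctly track to be uniform in $k$, in $\eps$ and in the local Lipschitz moduli of $\beta$ --- which is exactly the uniformity the cited theorem provides and the lemma's statement requires. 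Two small points worth making explicit: the identification $\|y\|_{C(\bar D)}=\|y\|_{\li}$ with $\eps$-independent constants is only legitimate \emph{after} continuity is known, which in your scheme comes from the $H^2$-regularity step (whose own bound is allowed to depend on $\eps$, since only the $L^\infty$ bound must be uniform); and in the $H^2$-Lipschitz upgrade you should say that $\|\beta(y_1)-\beta(y_2)\|_{\ld}\le L_M\|y_1-y_2\|_{\ld}$ is available precisely because of the uniform $\li$ bound $R(M)$ together with \eqref{eq:flip}, which is where the constant $L_{M,\eps}$ acquires its dependence on $M$. The Lipschitz-continuity argument itself (subtracting the equations, splitting $H_\eps(g_1)y_1-H_\eps(g_2)y_2$, keeping the nonnegative piece on the left, discarding the $\beta$-difference by monotonicity, then passing to $H^2$ via the $C^{1,1}$-regularity of $D$) coincides with the paper's proof. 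What your route buys is independence from the external reference and an explicit display of where each uniformity comes from; what it costs is length, since the cited theorem already contains the truncation-plus-Stampacchia machinery.
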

\begin{proof}According to \cite[Thm.\,4.8]{troe},  the equation \eqref{eq} admits a unique solution $ y \in H^1_0(D) \cap C(\bar D)$ with
$$\|y\|_{ \hdd \cap \li} \leq c\,\|f+\eps g-\beta(0)\|_{\ld},$$ where $c>0$ is independent of $\eps$, $\beta$, and of $g$. Then, by \cite[Lem.\,9.17]{gt}, the $H^2(D)$ regularity  follows.

To show the desired 
 Lipschitz continuity of the control-to-state map, let $g_1,g_2 \in L^2(D)$ be arbitrary but fixed and abbreviate $y_i:=S_\eps(g_i), i=1,2.$ As a consequence of the monotonicity of $\beta$ and the Lipschitz continuity and non-negativity of $H_\eps,$ cf.\,\eqref{reg_h}, one obtains
 \begin{align}
 \|y_1-y_2\|_{H^1_0(D)}^2 &\leq \eps \|g_1-g_2\|_{\ld} \|y_1-y_2\|_{\ld}
 \\&\quad +\frac{1}{\eps}\|H_\eps(g_1)-H_\eps(g_2)\|_{\ld}\|y_2\|_{C(\bar D)}\|y_1-y_2\|_{\ld}
 \\&\leq \|g_1-g_2\|_{\ld} \|y_1-y_2\|_{\ld}(\eps+\frac{L_{H_\eps}}{\eps}c\,\|f+\eps g_2-\beta(0)\|_{\ld}),
 \end{align}where $c>0$ is independent of $\eps$, $\beta$, and of $g_2$.
 This means that $S_\eps:\ld \to H^1_0(D)$ is Lipschitz continuous on bounded sets. In view of the $C^{1,1}$ regularity of $D$, the same can be concluded for $S_\eps:\ld \to H^1_0(D) \cap H^2(D).$
\end{proof}

Next we aim at highlighting the connection between \eqref{eq} and the state equation in \eqref{p_shh}. 

\begin{proposition}[Solvability of the state equation in \eqref{p_shh}]\label{prop:cor}
Let $g\in C(\bar D)$ be fixed so that $\Omega_g$ is a domain of class $C$. {Moreover, assume that   $g>0$ a.e.\,in $D \setminus \ogg$.}
% If there exists a sequence $\{g_\eps\}\subset C(D)$ so that 
%\[g_\eps \to g \text{ in }\ld,\] then
%\ko{$$S_\eps(g_\eps){|_{E}} \weakly y_g{|_{E}} \quad \text{in }H^1(\O_g) \ \text{ as }\eps \searrow 0,$$}where $S_\eps$ is the control-to-state map associated to \eqref{eq} and  $y_g \in H^1_0(\O_g)$ is the unique solution to 
Then, 
{$$S_\eps(g){|_{\Omega_g}} \weakly y_g \quad \text{in }H^1(\O_g) \ \text{ as }\eps \searrow 0,$$}where $S_\eps$ is the control-to-state map associated to \eqref{eq} and  $y_g \in H^1_0(\O_g)$ is the unique solution to 
\begin{equation}\label{awa}
  \begin{aligned}
   -\laplace y + \beta(y)&=f \quad \text{in }\O_g,
   \\y&=0  \quad \text{on } \partial \O_g.
 \end{aligned}
\end{equation}
%Moreover, $$\beta(S_\eps(g))_{|_{\O_g}}  \weakly \beta(y_g) \quad \text{in }L^2(\O_g) \ \text{ as }\eps \searrow 0.$$
\end{proposition}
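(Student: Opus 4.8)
### Proof strategy for Proposition~\ref{prop:cor}

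\textbf{Overall approach.}
The plan is to obtain the weak convergence $S_\eps(g)|_{\Omega_g} \weakly y_g$ by a standard energy-estimate-plus-compactness argument, being careful about the one genuinely non-standard feature: the penalization term $\frac{1}{\eps}H_\eps(g)y$ forces the limit to satisfy the homogeneous Dirichlet condition on $\partial\O_g$, which is how the fixed-domain extension ``recovers'' the original boundary value problem \eqref{awa}. First I would abbreviate $y_\eps:=S_\eps(g)\in\hd\cap H^2(D)$ and record that, by Lemma~\ref{lem:S}, $\{y_\eps\}$ is bounded in $\hd\cap C(\bar D)$ uniformly in $\eps$, since the bound $\|y_\eps\|_{\hdd\cap\li}\le c_1+c_2\|g\|_{\ld}$ is $\eps$-independent and $g$ is fixed. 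Hence, along a subsequence, $y_\eps\weakly y$ in $\hd$ and $y_\eps\to y$ strongly in $\ld$ (Rellich), with $y\in\hd$.

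\textbf{Key steps.}
First, testing \eqref{eq} with $y_\eps$ itself and isolating the penalization term, one gets
\begin{equation*}
\frac{1}{\eps}\int_D H_\eps(g)\,y_\eps^2\,dx
= \int_D (f+\eps g)\,y_\eps\,dx - \|\nabla y_\eps\|_{\ld}^2 - \int_D \beta(y_\eps)\,y_\eps\,dx,
\end{equation*}
and the right-hand side is bounded uniformly in $\eps$ (using the monotonicity $\beta(y_\eps)y_\eps\ge\beta(0)y_\eps$ to control the $\beta$-term from below). This yields $\int_D H_\eps(g)\,y_\eps^2\,dx = O(\eps)\to 0$. Since $H_\eps(g)\nearrow H(g)=\mathbf 1_{\{g>0\}}$ pointwise and $H_\eps(g)\ge H_{\eps'}(g)$ for $\eps\le\eps'$, a Fatou/monotonicity argument combined with the strong $\ld$-convergence $y_\eps\to y$ forces $\int_{\{g>0\}} y^2\,dx=0$, i.e.\ $y=0$ a.e.\ on $\{g>0\}$. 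By the hypothesis $g>0$ a.e.\ in $D\setminus\ogg$ and the identification $\O_g=\{g<0\}$ from Lemma~\ref{fs}\eqref{class2} (together with $\mu\{g=0\}=0$ from \eqref{ls}), this says precisely that $y=0$ a.e.\ in $D\setminus\O_g$. Because $y\in\hd$ vanishes outside $\O_g$ and $\partial\O_g$ is $C^2$ with measure-zero boundary, the restriction $y|_{\O_g}$ lies in $H^1_0(\O_g)$.

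Second, I would pass to the limit in the weak form of \eqref{eq} tested against an arbitrary $\varphi\in C_c^\infty(\O_g)$. For such $\varphi$, the extension by zero lies in $\hd$ and, crucially, $H_\eps(g)\varphi=0$ since $\supp\varphi\subset\O_g=\{g<0\}$ where $H_\eps(g)\equiv 0$; so the penalization term drops out entirely. The term $\int_D\beta(y_\eps)\varphi\,dx\to\int_D\beta(y)\varphi\,dx$ follows from the strong $\ld$-convergence of $y_\eps$ and the Lipschitz estimate \eqref{eq:flip}, while the $+\eps g$ contribution vanishes in the limit. This gives
\begin{equation*}
\int_{\O_g}\nabla y\cdot\nabla\varphi\,dx + \int_{\O_g}\beta(y)\varphi\,dx = \int_{\O_g} f\varphi\,dx
\qquad\forall\,\varphi\in C_c^\infty(\O_g),
\end{equation*}
so $y|_{\O_g}$ is a weak solution of \eqref{awa}; uniqueness (from strict monotonicity of $-\laplace+\beta$) identifies the limit and upgrades the convergence to the full sequence. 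Finally, $y_\eps|_{\O_g}\weakly y_g$ in $H^1(\O_g)$ is immediate from the weak $\hd$-convergence on $D$ restricted to $\O_g$.

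\textbf{Main obstacle.}
The delicate point is not the weak limit in the interior test functions---that is routine---but rigorously extracting the boundary condition $y|_{\partial\O_g}=0$ from the vanishing of the penalization energy. The estimate only gives $\int_D H_\eps(g)y_\eps^2\to 0$, and turning this into $y=0$ a.e.\ on $\{g>0\}$ requires care because $H_\eps(g)$ is small (not bounded below) on the transition layer $\{0<g<\eps\}$ whose measure shrinks with $\eps$. I expect the cleanest route is to fix any $\eps_0>0$ and note that for $\eps<\eps_0$ one has $H_\eps(g)\ge H_{\eps_0}(g)$ on all of $D$, so $\int_D H_{\eps_0}(g)y_\eps^2\,dx\le\int_D H_\eps(g)y_\eps^2\,dx\to 0$; passing $\eps\to 0$ with $y_\eps\to y$ in $\ld$ gives $\int_D H_{\eps_0}(g)y^2\,dx=0$, and then letting $\eps_0\to 0$ with monotone convergence $H_{\eps_0}(g)\nearrow\mathbf 1_{\{g>0\}}$ yields $\int_{\{g>0\}}y^2\,dx=0$. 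The hypotheses that $\O_g$ is of class $C$ (so that $H^1_0(\O_g)$ is characterized by zero extension) and that $g>0$ a.e.\ outside $\bogg$ are exactly what is needed to convert ``$y=0$ outside $\O_g$'' into ``$y|_{\O_g}\in H^1_0(\O_g)$''.
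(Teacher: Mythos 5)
Your proposal is correct and follows essentially the same approach as the paper: energy estimate, uniform bounds from Lemma \ref{lem:S}, weak compactness, vanishing of the penalization energy to force $\widehat y=0$ a.e.\ on $\{g>0\}$, membership in $H^1_0(\O_g)$ via the class-$C$ zero-extension characterization (the paper cites \cite[Thm.\,2.3]{t_c} here), and passage to the limit against $\varphi\in C_c^\infty(\O_g)$ where the penalization term drops out. The only (harmless) variation is your two-step monotonicity argument $H_\eps\ge H_{\eps_0}$ for extracting $\int_{\{g>0\}}y^2\,dx=0$, where the paper instead uses $H_\eps(g)\to H(g)$ in $L^q(D)$ together with the compact embedding; also note that your appeals to Lemma \ref{fs} and to $C^2$ regularity of $\partial\O_g$ are not available under the proposition's weaker hypotheses, but they are not needed, since the assumption $g>0$ a.e.\ in $D\setminus\ogg$ already yields $\widehat y=0$ a.e.\ in $D\setminus\O_g$.
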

\begin{remark}\label{rem:fs}
All the admissible domains $\O_g \in \OO$ (Definition \ref{o}) satisfy (together with their associated parametrization) the hypotheses of Proposition \ref{prop:cor}, \ro{as a result of Lemma \ref{fs}}.
\end{remark}
%\ro{das kann passieren..auch auf dem rand von o g , es ist nicht klar ob das mass null hat , oder ausserhalb $\bar \ogg$...mit irrationalen zahlen}
%\begin{remark}Given a function $g \in C(\bar D)$ that does not satisfy the strict positivity hypothesis outside $\O_g$ in Proposition \ref{prop:cor}, one can construct (as in the proof of  Lemma \ref{lem:g>0}) an infinite number of functions that coincide with $g$ on $\bar \O_g$ and are strict positive outside $\bar \O_g$ (in each point). \end{remark}
\begin{proof}
We follow the arguments from the proof of \cite[Thm.\,1]{nt}. For the beginning, let $\eps>0$ be arbitrary but fixed. We abbreviate for simplicity $y_\eps:=S_\eps(g)$ and, by multiplying \eqref{eq} with $y_\eps$, we infer
  \begin{equation}\label{e1}
  \|y_\eps\|_{\hd}^2 +\int_D \beta(y_\eps)y_\eps \,dx+\frac{1}{\eps}\int_D H_\eps(g)y_\eps^2 \,dx \leq \int_D (f +\eps g)y_\eps \,dx.\end{equation}
 Since $\beta$ is monotonically increasing and $H_\eps(g)\geq 0,$ by assumption, we  deduce from \eqref{e1} the estimate
  $$\|y_\eps\|_{\hd}^2 +\underbrace{\int_D [\beta(y_\eps)-\beta(0)]y_\eps }_{\geq 0}\,dx\leq \int_D (f +\eps g)y_\eps \,dx-\int_D \beta(0)y_\eps \,dx.
  $$By applying Young's inequality on the right-hand side, we derive uniform bounds w.r.t.\,$\eps$ so that we can extract a weakly convergent subsequence    \begin{equation}\label{yw}y_\eps \weakly \widehat y\quad \text{in }\hd.  \end{equation}
  In view of \eqref{eq:flip} and the compact embedding $\hdd \embed \embed \ld$, we then have 
  \begin{equation}\label{b2}
  \beta(y_\eps) \to \beta(\widehat y) \quad \text{in }\ld;
\end{equation}note that $\|y_\eps\|_{\li} \leq c,$ where $c>0$ is independent of $\eps,$ by Lemma \ref{lem:S}. 
Further, multiplying \eqref{e1} with $\eps$ implies
  $$\int_D H_\eps(g)y_\eps^2 \,dx \to 0 \text{ as }\eps \searrow 0.$$
  In light of \eqref{reg_h}, it holds $H_\eps(g) \to H(g) \ \text{in }L^q( D),\ q\in [1,\infty),$ where $H:\R \to \{0,1\}$ stands for the Heaviside function \eqref{h}.
Thus, by \eqref{yw} and the compact embedding $\hdd \embed \embed L^q(D),$ one obtains
%\cite[Thm.\,3.23]{dac}, $H_\eps(g) \geq 0$ and \eqref{yw} one obtains
%true if the set $\{g=0\}$ has measure zero
  $$\int_{D\setminus  \ogg} \widehat y^2 \,dx =\int_D H(g)\widehat y^2 \,dx  = \lim_{\eps \searrow 0} \int_D H_\eps(g)y_\eps^2 \,dx = 0 ,$$
where the first identity is due to \eqref{def:og}, \eqref{h}, and $g>0$ a.e.\,in $D \setminus \ogg$,  by assumption. Hence, $\widehat y=0\ \ae D\setminus  \ogg$. As $\widehat y \in \hd$, cf.\,\eqref{yw}, it can be extended by zero on $\R^2$ (while preserving the $H^1$ regularity). Since   $\O_g$ is of class $C$, by assumption, the result in \cite[Thm.\,2.3]{t_c} implies that the weak limit from \eqref{yw} satisfies   \begin{equation}\label{y_h01og}
\widehat y \in H^1_0(\O_g).
\end{equation}

Testing \eqref{eq} with $\phi \in C_c^\infty(\O_{g})$ further  implies 
$$\int_{\O_{g}} \nabla y_\eps \nabla \phi \,dx +\int_{\O_{g}} \beta(y_\eps) \phi \,dx =\int_{\O_{g}} (f +\eps g)\phi \,dx,$$ since $H_\eps(g)=0$ on $ \O_{g}$ (cf.\,definition \eqref{def:og} and \eqref{reg_h}). Passing to the limit $\eps \searrow 0$, where one relies on \eqref{yw}, \eqref{b2}, then results in
$$\int_{\ogg} \nabla \widehat y \,\nabla \phi \,dx +\int_{\ogg} \beta(\widehat y) \phi \,dx =\int_{\ogg} f \phi \,dx.$$Since $\widehat y \in H^1_0(\O_g),$ cf.\,\eqref{y_h01og}, we have $\widehat y_{|\O_g}=y_g$ and the proof is now complete.
\end{proof}

The  result in Proposition \ref{prop:cor} allows us to introduce
\begin{definition}[The control-to-state map associated to the state equation in \eqref{p_shh}]\label{S}
We define
\begin{equation}
\SS:g \in \FF_{\mathfrak{s}} \mapsto y_g \in  H^1_0(\O_g) \cap H^2(\O_g), \end{equation}
where $y_g$ solves the equation \eqref{awa} on the component  of $\O_g$ containing $E$, i.e., $\O_g \in \OO$ (with a little abuse of notation, we do not make a difference between the notation for $\O_g$ and the notation for its relevant component; we just write $\O_g \in \OO$ when we mean the latter, cf.\,Remark \ref{rem:e}).
\end{definition}

Note that the additional $H^2(\O_g)$ regularity is due to the fact that each admissible shape $\O_g$ is of class $C^2$ (Lemma \ref{fs}.\ref{class}) and \cite[Lem.\,9.17]{gt}.

\begin{remark}\label{rem:S}
According to Definition \ref{S},  $\SS(g)$ exists only as an element of $H_0^1(\O_g).$ Whenever we write $\SS( g)$ as an element of $\hdd$ in what follows,  we think of its extension by zero outside $\O_g \in \OO$.
\end{remark}
\subsection{Convergence properties}\label{sec:s_e}
This subsection is dedicated to the study of  other limit behaviours of solutions to \eqref{eq} when $\eps \to 0$. In addition to the convergence from Proposition \ref{prop:cor}, we will need in the proof of our main result (Theorem \ref{thm:cor})  two convergence results that are contained in Lemmas \ref{lem:cp} and \ref{lem:convv} below. Note that, from now on, we simply write $\O_g \in \OO$ when we talk about the relevant component of $\O_g$ (Remark \ref{rem:e}).
\begin{lemma}\label{lem:cp}
Let $\{g_\e\} \subset \FF$ and $g \in \FF_{\mathfrak{s}}$ so that 
\[g_\e \to g \quad \text{in }\ld \cap L^\infty(D \setminus \bar E)\quad \text{as }\e \searrow 0.\]
%Then, there exists $\e>0$, independent of $x$, so that
%\begin{equation}\label{h_eq}
%H_\eps( g_\e)= H( g) \quad \text {a.e.\,in }D.\end{equation}
Then, for each compact subset $K$ of $\ogg$, 
there exists $\e_0>0$, independent of $x$, so that
\begin{equation}\label{g_k}
g_\e \leq 0 \quad  \ae K,\ \forall\,\e\in(0,\e_0].\end{equation}
Moreover,
\[S_\eps( g_\e) \weakly  \SS( g) \quad \text {in }\hd \quad \text{as }\e \searrow 0.\]
\end{lemma}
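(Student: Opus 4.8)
The plan is to treat the two assertions separately, proving the pointwise sign control first and then adapting the compactness argument from the proof of Proposition~\ref{prop:cor} to the moving right-hand side $g_\e$.

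For the first claim, I would fix a compact set $K \subset \O_g$. Since $g \in \FF_s$, Lemma~\ref{fs} gives $\O_g = \{x \in D : g(x) < 0\}$, so continuity of $g$ and compactness of $K$ yield a $\delta>0$ with $g \leq -\delta$ on $K$. I would then split $K$, up to the null set $K \cap \partial E$ (cf.\ Assumption~\ref{assu:E}), into $K \cap E$ and $K \setminus \clos{E}$. On $K \cap E$ the bound $g_\e \leq 0$ holds a.e.\ simply because $g_\e \in \FF$. On $K \setminus \clos{E} \subset D \setminus \clos{E}$ I would invoke the hypothesis $g_\e \to g$ in $L^\infty(D \setminus \clos{E})$: choosing $\e_0$ so that $\|g_\e - g\|_{L^\infty(D \setminus \clos{E})} \leq \delta$ for all $\e \leq \e_0$ gives $g_\e \leq g + \delta \leq 0$ there. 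Since $\delta$, and hence $\e_0$, depends only on $K$ and not on the individual point $x$, this proves \eqref{g_k}.

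For the convergence I would set $y_\e := S_\e(g_\e)$ and follow the scheme of Proposition~\ref{prop:cor}. Testing \eqref{eq} with $y_\e$ and using monotonicity of $\beta$ together with $H_\e(g_\e) \geq 0$ gives, after Young's inequality, a bound on $\|y_\e\|_{\hd}$ that is uniform in $\e$ (note $\e g_\e \to 0$ in $\ld$ since $g_\e$ is $\ld$-bounded), while Lemma~\ref{lem:S} simultaneously furnishes a uniform $\li$ bound. Passing to a subsequence, $y_\e \weakly \widehat y$ in $\hd$, and by the compact embedding $\hd \embed \embed \ld$ together with \eqref{eq:flip} one obtains $\beta(y_\e) \to \beta(\widehat y)$ in $\ld$.

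The main obstacle is showing $\widehat y = 0$ a.e.\ on $D \setminus \O_g$, and this is precisely where the $L^\infty(D \setminus \clos{E})$ convergence is genuinely needed rather than mere $\ld$ convergence. Multiplying the energy identity by $\e$ yields $\int_D H_\e(g_\e) y_\e^2 \, dx \to 0$. For a compact $K \subset D \setminus \clos{\O_g} = \{g>0\}$ (such $K$ is disjoint from $\clos{E}$, since $E \subset \O_g$ forces $\clos{E}\subset\clos{\O_g}$), continuity and compactness give $g\ge\delta>0$ on $K$, and the $L^\infty$ convergence then yields $g_\e \geq \delta/2 > \e$ on $K$ for $\e$ small, whence $H_\e(g_\e) = 1$ a.e.\ on $K$; therefore $\int_K y_\e^2 \, dx \leq \int_D H_\e(g_\e) y_\e^2 \, dx \to 0$, and the $\ld$ convergence of $y_\e$ forces $\int_K \widehat y^2\,dx = 0$. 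Exhausting $\{g>0\}$ by such compacts gives $\widehat y = 0$ a.e.\ outside $\O_g$, and since $\O_g$ is of class $C^2$ (Lemma~\ref{fs}) the extension-by-zero result \cite[Thm.\,2.3]{t_c} yields $\widehat y \in H^1_0(\O_g)$. Finally, I would test \eqref{eq} with $\phi \in C_c^\infty(\O_g)$: its support is a compact subset of $\O_g$, so by the first part $H_\e(g_\e) = 0$ there for $\e$ small and the penalization term drops out; passing to the limit with $y_\e \weakly \widehat y$, $\beta(y_\e) \to \beta(\widehat y)$ and $\e g_\e \to 0$ shows that $\widehat y$ solves \eqref{awa} on $\O_g$. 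By uniqueness $\widehat y = \SS(g)$, and since the limit is independent of the extracted subsequence, a standard subsequence argument upgrades the convergence to the whole family, completing the proof.
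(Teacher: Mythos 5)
Your proposal is correct and follows the same overall scheme as the paper's proof: the first claim is established identically (continuity of $g$ on the compact $K$, the $L^\infty(D\setminus\bar E)$ convergence away from $E$, and the constraint $g_\e\le 0$ a.e.\ in $E$ coming from $g_\e\in\FF$), and the identification of the weak limit by testing with $\phi\in C_c^\infty(\O_g)$ and using \eqref{g_k} to remove the penalization term for small $\e$ is exactly the paper's argument. The one step you handle by a genuinely different route is the proof that $\widehat y=0$ a.e.\ on $D\setminus\O_g$. The paper decomposes $D$ into $\{g<0,\,g_\e<0\}$, $\{g<0,\,g_\e\ge 0\}$, $\{g>0,\,g_\e\le\e\}$ and $\{g>0,\,g_\e>\e\}$, and uses only the $\ld$ convergence of the controls (via $\mu\{g<0,\,g_\e\ge0\}\to0$ and $\mu\{g>0,\,g_\e-\e\le0\}\to0$) together with the uniform $L^\infty$ bound on $y_\e$ to conclude $\int_{\{g>0\}}y_\e^2\to0$. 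You instead exhaust $\{g>0\}=D\setminus\overline{\O_g}$ by compact sets, on each of which the $L^\infty(D\setminus\bar E)$ convergence forces $g_\e\ge\delta/2>\e$, hence $H_\e(g_\e)=1$, for $\e$ small; your observation that such compacts avoid $\overline E$ because $E\subset\O_g$ is the point that makes this legitimate. Your route is shorter and exploits the stronger $L^\infty$ hypothesis, which is indeed available; the paper's measure-theoretic splitting is more robust in that this particular step would survive with only $\ld$ convergence of the controls. Both arguments need $\mu\{g=0\}=0$ (Lemma \ref{fs}) to pass from $\{g>0\}$ to $D\setminus\O_g$, which you use implicitly. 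No gap.
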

\begin{proof}
%From
% \[g_\e \to g \text{ in }L^\infty(D \setminus E)\]
%we deduce \[g_\e \leq 0 \quad \text{ a.e.\,in }(D\setminus E)\cap \{g <0\},\]
%\[g_\e \geq \e \quad \text{ a.e.\,in }(D\setminus E)\cap \{g >0\}\]
%for $\e>0$ small enough, independent of $x$. Since the set $\{g =0\}$ has measure zero (see Lemma \ref{fs}.\ref{ls}),
%it holds  $H_\eps(g_\e)=H(g)$ a.e.\,in $ D\setminus E$ (cf.\,\eqref{reg_h} and \eqref{h}) for $\e>0$ small enough, independent of $x$. As $g_\e \in \FF$ and $g\in \FF_{\mathfrak{s}}$, by assumption, we also have $H_\eps(g_\e)=0=H(g)$ a.e.\,in $E$. 
%This proves \eqref{h_eq}.
%
%Let us now show the desired convergence. 
Since $g$ is continuous, we have
\[g(x)\leq - \delta \quad \forall\, x \in K \subset \subset \Omega_g\]
for some $\delta>0.$
As $g_\e \to g $ in $L^\infty(D \setminus \bar E)$, by assumption, there exists $\e>0$, small, independent of $x$, so that
\[g_\e(x) \leq g(x)+\delta/2 \leq -\delta/2<0 \quad \ae K \cap (D \setminus \bar E).\]
Since $g_\e \in \FF$, we can thus conclude \eqref{g_k}.

Let us now show the desired convergence. We abbreviate for simplicity $y_\eps:=S_\eps(g_\e)$ and, by arguing exactly as in the proof of Proposition \ref{prop:cor}, we show that 
there exists a  weakly convergent subsequence (denoted by the same symbol) with  \begin{equation}\label{yw0}y_\eps \weakly \widehat y\quad \text{in }\hd \quad \text{as }\eps \searrow 0, \end{equation}
  \begin{equation}\label{b20}
  \beta(y_\eps) \to \beta(\widehat y) \quad \text{in }\ld \quad \text{as }\eps \searrow 0.
\end{equation}
Moreover,
 \begin{equation}\label{heg}
 \int_D H_\eps(g_\e)y_\eps^2 \,dx \to 0 \text{ as }\eps \searrow 0. \end{equation}
  We also notice that, by Lemma \ref{lem:S}, there exists a constant $C>0$, independent of $\eps$, so that
 \begin{equation}\label{ye0}
 \|y_\eps\|_{L^\infty(D)}\leq C \quad \forall \eps>0.
 \end{equation}
  As a result of $g_\e \to g$ in $\ld$ combined with Lemma \ref{lem:app}, we  have 
\[\mu\{x \in D:g<0 \text{ and }g_\e \geq 0\} \to 0 \quad \text{as }\e \searrow 0,\]
\[\mu\{x \in D:g>0 \text{ and }g_\e-\e \leq 0\} \to 0 \quad \text{as }\e \searrow 0\]
and with \eqref{ye0}, it follows
\[\lim_{\e \to 0}  \int_{\{g<0, g_\e\geq 0\}} H_\eps(g_\e)y_\eps^2 \,dx+\lim_{\e \to 0}  \int_{\{g>0,g_\e\leq \e\}} H_\eps(g_\e)y_\eps^2 \,dx=0.\]
  Thus, by \eqref{heg}, Lemma \ref{fs}.\ref{ls},  \eqref{reg_h} and \eqref{yw0} combined with the compact embedding $H^1(D) \embed \embed \ld$, one deduces
\begin{equation}\label{h_y}
\begin{aligned}
0=\lim_{\e \to 0} \int_D H_\eps(g_\e)y_\eps^2 \,dx &=\lim_{\e \to 0}  \int_{\{g<0, g_\e<0\}} H_\eps(g_\e)y_\eps^2 \,dx+\lim_{\e \to 0}  \int_{\{g>0,g_\e>\e\}} H_\eps(g_\e)y_\eps^2 \,dx
\\&=\lim_{\e \to 0}  \int_{\{g>0,g_\e>\e\}} y_\eps^2 \,dx=\lim_{\e \to 0}  \int_{\{g>0\}} y_\eps^2 \,dx
=\lim_{\e \to 0}  \int_{D\setminus \ogg} y_\eps^2 \,dx
\\&=\int_{D\setminus  \O_{g}}  \widehat y^2 \,dx .\end{aligned}
\end{equation}
%  
% As a result of \eqref{h_eq}, \eqref{yw0} and the compact embedding $\hdd \embed \embed L^\varrho(D),\ \varrho<\infty,$ one then obtains
%  $$\int_{D\setminus  \O_{g}}  \widehat y^2 \,dx =\int_D H(g)\widehat y^2 \,dx  = \lim_{\eps \searrow 0} \int_D H_\eps(g_\e)y_\eps^2 \,dx = 0 ,$$
%where the first identity is due to \eqref{def:og}, and the fact that $g \in \FF_{\mathfrak{s}}$, see also Lemma \ref{fs}. 
Hence, $\widehat y=0\ \ae D\setminus   \O_{g}$, and  in view of \cite[Thm.\,2.3]{t_c} (applied for the relevant component of $\O_g$), this implies that  $$\widehat y \in H^1_0( \O_{g}).$$Note that here we use that $ \O_{g} \in \OO$ is a domain of class $C$ (even $C^2$, see Lemma \ref{fs}.\ref{class}).
 Testing \eqref{eq} w.r.h.s.\,$g_\e$ with $\phi \in C_c^\infty( \O_{g}),  \O_{g} \in \OO,$ further yields
$$\int_{\O_{g}} \nabla y_\eps \nabla \phi \,dx +\int_{\O_{g}} \beta(y_\eps) \phi \,dx +\int_{ \O_{g}}\frac{1}{\eps}H_\eps( g_\eps)y_\eps \phi \,dx=\int_{\O_{g}} (f +\eps g_\e)\phi \,dx.$$ For a fixed $\phi \in C_c^\infty( \O_{g})$ there exists a compact subset $\widetilde K$ of $\ogg$ so that 
$\phi \in C_c^\infty(\widetilde K)$. Hence, by \eqref{g_k} and \eqref{reg_h}, the third term in the above variational identity vanishes for $\e>0$ small enough, independent of $x$ (dependent on $\widetilde K$, and thus on $\phi$). 
Passing to the limit $\eps \searrow 0$, where one relies on \eqref{yw0}, \eqref{b20}, and the uniform boundedness of $\{g_\e\}$ in $\ld$, then results in
$$\int_{\ogg} \nabla \widehat y \,\nabla \phi \,dx +\int_{\ogg} \beta(\widehat y) \phi \,dx =\int_{\ogg} f \phi \,dx.$$As $\widehat y \in H^1_0(\O_g),$ and since \eqref{awa} is uniquely solvable, we conclude  $\widehat y=\SS(g)$ and, thanks to \eqref{yw0}, the proof is now complete.
\end{proof}
\begin{assumption}\label{yd_f}
For  the given datum $f$ and the desired state $ y_d$ we require
 \begin{equation}\label{f}
f \geq \beta(0)\quad \ae D,\quad y_d \leq 0\quad \ae E\end{equation}or, alternatively, 
\begin{equation}\label{f0}
f \leq \beta(0) \quad \ae D,\quad y_d \geq 0\quad \ae E.\end{equation}
\end{assumption}
\begin{lemma}\label{lem:convv}
If Assumption \ref{yd_f} is satisfied, 
 then
 \[\liminf_{\eps \to 0} \int_E (S_\eps(  g_\eps)(x)-y_d(x))^2 \; dx \geq \liminf_{\eps \to 0} \int_E (\SS(  g_\eps)(x)-y_d(x))^2 \; dx \]for each sequence $\{g_\e\}\subset \FF_{\mathfrak{s}}$ that is uniformly bounded in $\ld$.
\end{lemma}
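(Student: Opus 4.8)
The plan is to prove the stronger, $\eps$-wise estimate $\int_E (S_\eps(g_\eps)-y_d)^2\,dx \ge \int_E(\SS(g_\eps)-y_d)^2\,dx$ for all small $\eps$, from which the $\liminf$ inequality is immediate. Throughout I fix $\eps$ and abbreviate $y_\eps:=S_\eps(g_\eps)$ (defined on $D$) and $z_\eps:=\SS(g_\eps)$, the latter being the solution on the relevant component $\O_{g_\eps}\supset E$, extended by zero as in Remark \ref{rem:S}. Since $g_\eps\in\FF_s$, we have $g_\eps\le 0$ on $E$ and $\O_{g_\eps}=\{g_\eps<0\}$ by Lemma \ref{fs}, so $H_\eps(g_\eps)=0$ on $E$; hence on $E$ both states solve $-\laplace y_\eps+\beta(y_\eps)=f+\eps g_\eps$ and $-\laplace z_\eps+\beta(z_\eps)=f$, respectively.

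The reduction to a pointwise statement rests on the algebraic identity $\int_E(y_\eps-y_d)^2\,dx-\int_E(z_\eps-y_d)^2\,dx=\int_E (y_\eps-z_\eps)(y_\eps+z_\eps-2y_d)\,dx$. Thus it suffices to establish the chain of inequalities $y_d\le z_\eps\le y_\eps$ a.e.\ in $E$ under \eqref{f} (and the reversed chain $y_d\ge z_\eps\ge y_\eps$ under \eqref{f0}): in either case both factors $y_\eps-z_\eps$ and $y_\eps+z_\eps-2y_d=(y_\eps-y_d)+(z_\eps-y_d)$ carry the same sign, the integrand is nonnegative, and the claim follows. So the whole proof amounts to two comparison statements.

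For $z_\eps\ge y_d$ in $E$ I would invoke the weak comparison principle for the monotone semilinear operator $-\laplace+\beta(\cdot)$. The first line of \eqref{f}, namely $f\ge -\laplace y_d+\beta(y_d)$ a.e.\ in $E$, says exactly that $y_d\in H^1_0(E)$ is a weak subsolution on $E$ of the equation solved by $z_\eps$; testing the difference of the two weak formulations with $(y_d-z_\eps)^+$ and using monotonicity of $\beta$ then gives $z_\eps\ge y_d$, provided $(y_d-z_\eps)^+\in H^1_0(E)$, i.e.\ provided the trace of $z_\eps$ on $\partial E$ is nonnegative. Controlling this trace is where the second line of \eqref{f}, $f\ge\beta(0)$ on $D\setminus E\supset\O_{g_\eps}\setminus E$, enters, since there $0$ is a subsolution and $z_\eps=0$ on $\partial\O_{g_\eps}$. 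For the remaining ordering $z_\eps\le y_\eps$ in $E$ — which encodes the effect of replacing the true domain $\O_{g_\eps}$ by the penalized extension to $D$ — I would compare $y_\eps$ with the zero-extension of $z_\eps$ on $\O_{g_\eps}$: there the penalization vanishes, the two states satisfy the same monotone equation up to the source perturbation $\eps g_\eps$, on $\partial\O_{g_\eps}$ one has $z_\eps=0$ while $f\ge\beta(0)$ forces $y_\eps\ge 0$, and the difference $y_\eps-z_\eps$ solves a linear equation with nonnegative zeroth-order coefficient (monotonicity of $\beta$) and nonnegative boundary data, so that $y_\eps\ge z_\eps$ follows, up to an $o(1)$ defect from the term $\eps g_\eps$ that is harmless inside the $\liminf$.

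The hard part will be precisely these comparison steps across the interfaces $\partial E$ and $\partial\O_{g_\eps}$. Unlike on a fixed domain, neither the trace of $z_\eps$ on $\partial E$ nor that of $y_\eps$ on the free boundary $\partial\O_{g_\eps}$ is prescribed, and a naive zero-extension of $y_d$ across $\partial E$ produces an interface flux $\int_{\partial E}\partial_n y_d\,\phi\,ds$ of indeterminate sign; pinning down these signs is what requires the full strength of Assumption \ref{yd_f} (its $E$-part making $y_d$ a sub/supersolution, its $D\setminus E$-part fixing signs near $\partial E$ and $\partial\O_{g_\eps}$), together with the weak maximum principle for $-\laplace+\beta$ and the embedding $H^2(\O_{g_\eps})\hookrightarrow C(\overline{\O_{g_\eps}})$ valid in two dimensions. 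The second delicate point is that $\O_{g_\eps}$ varies with $\eps$, so no single comparison domain is available and the perturbation $\eps g_\eps$ is only controlled through the uniform $L^2(D)$-bound on $\{g_\eps\}$; this is exactly why the conclusion is phrased with $\liminf$ rather than as an $\eps$-wise identity.
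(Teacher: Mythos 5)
Your overall architecture is sound and matches the paper's in spirit: strip off the $\eps g_\eps$ perturbation (an $O(\eps)$ error in $\hd$, which is exactly the paper's first step), then reduce the claim to a pointwise ordering $y_d\le\,\cdot\,\le y_\eps$ on $E$, from which $(y_\eps-y_d)^2\ge(\,\cdot\,-y_d)^2$ is immediate. Your direct comparison of $y_\eps$ with $z_\eps=\SS(g_\eps)$ on the variable domain $\O_{g_\eps}$ is also a legitimate alternative to what the paper does (the paper instead introduces an intermediate state $\widetilde y_\eps$ solving the equation penalized with the exact Heaviside term $\frac1\eps H(\hge)\widetilde y_\eps$, proves the sandwich $y_\eps\ge\widetilde y_\eps\ge y_{d,0}$ on all of $D$, and then shows $\|\widetilde y_\eps-z_\eps\|_{\hd}\to0$ by an energy estimate). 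But the way you propose to establish the two orderings is circular. To get $z_\eps\ge y_d$ in $E$ you first want $z_\eps\ge 0$ on $\partial E$ (so that $(y_d-z_\eps)^+\in H^1_0(E)$), and you propose to obtain this by comparing $z_\eps$ with $0$ on $\O_{g_\eps}\setminus E$ using $f\ge\beta(0)$ there; but that comparison needs $z_\eps\ge0$ on the \emph{whole} boundary of $\O_{g_\eps}\setminus\overline E$, which includes $\partial E$ --- the very inequality you are trying to prove. The same circularity affects $y_\eps\ge0$ on $D\setminus E$, which you need as boundary data for the comparison on $\O_{g_\eps}$: it cannot be derived from the $D\setminus E$ part of \eqref{f} alone, because the negativity set of $y_\eps$ may reach into $E$, where only the other part of \eqref{f} is available.

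The missing idea is to run a \emph{single global} comparison against the zero extension $y_{d,0}\in\hd$ of $y_d$, as the paper does: $y_{d,0}$ is simultaneously a subsolution in $E$ (the $\chi_E$ part of \eqref{f}) and in $D\setminus E$ (the $\chi_{D\setminus E}$ part), and the penalization term annihilates it because $\supp y_{d,0}\subset\overline E\subset\{g_\eps\le0\}$; testing the difference of the two weak formulations with $(y_{d,0}-y_\eps)^+$ over all of $D$ then yields $y_\eps\ge y_{d,0}$ in one stroke, with no separate trace control on $\partial E$ ever required (and likewise one gets the lower bound $y_{d,0}$ for the state on $\O_{g_\eps}$, where the boundary data already match). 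The interface flux $\langle\partial_n y_d,\cdot\rangle_{\partial E}$ that you correctly flag does not disappear in this global formulation either --- it is precisely the singular part of $-\laplace y_{d,0}$ --- and you leave its sign unresolved, so your central comparison is never actually carried out; the paper absorbs this point into the assertion that $y_{d,0}$ solves \eqref{eq_yd0}. As written, your proposal identifies the correct target inequalities but establishes neither of them.
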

\begin{proof}Let $\e>0$ be arbitrary but fixed. Arguing as in Lemma \ref{lem:S} we see that the equation 
\begin{equation}\label{ye}
  \begin{aligned}
   -\laplace y_\e + \beta(y_\e)+\frac{1}{\eps}H_\e(g_\e)y_\e&=f\quad \text{in }D,
   \\y_\e&=0\quad \text{on } \partial D
 \end{aligned}
\end{equation}admits a unique solution $y_\e:=\widehat S_\eps( g_\eps) \in \hd \cap H^2(D)$. Since $\{g_\e\}$ is uniformly bounded in $\ld$, by assumption, we have 
\[\|y_\e- S_\eps( g_\eps)\|_{\hd} \leq \e\|g_\e\|_{\ld} \to 0 \quad \text{as }\e \searrow 0.\]Thus, it suffices to show  
\begin{equation}\label{ye4}
\liminf_{\eps \to 0} \int_E (y_\eps(x)-y_d(x))^2 \; dx \geq \liminf_{\eps \to 0} \int_E (\SS(  g_\eps)(x)-y_d(x))^2 \; dx. \end{equation}
We abbreviate  $z_\e:=\SS(g_\eps)$, see Definition \ref{S} and \eqref{awa}. In view of Lemma \ref{lem:g>0}, we can define  $\hge \in \FF_{\mathfrak{s}}$ so that the entire set $\O_{\hge}$ is just   the relevant component of $\O_{g_\e}$. Hence, $z_\e$ is the unique solution of 
\begin{equation}
  \begin{aligned}
   -\laplace z_\e + \beta(z_\e)&=f \quad \text{in }\O_{\hge},
   \\z_\e&=0  \quad \text{on } \partial \O_{\hge}.
 \end{aligned}
\end{equation}
Let $k>0$ be  arbitrary, but fixed  and define $z_\e^k:=\widehat S_k (\widetilde g_\e)$, i.e., $z_\e^k$ solves 
\begin{equation}\label{yee}
  \begin{aligned}
   -\laplace z_\e^k + \beta(z_\e^k)+\frac{1}{k}H_k(\widetilde g_\e)z_\e^k&=f \quad \text{in }D,
   \\z_\e^k&=0\quad \text{on } \partial D.
 \end{aligned}
\end{equation}
In view of Lemma \ref{fs}, the hypotheses of Proposition \ref{prop:cor} are satisfied. By arguing in the exact same way as in the proof thereof we deduce that there exists a (not relabelled) subsequence $\{z_\e^k\}_k$ so that
\begin{equation}\label{zek}
z_\e^k \weakly z_\e \quad \text{in }H_0^1(D)\quad \text{as }k \to 0.\end{equation}With a little abuse of notation, $z_\e$ denotes here its extension by zero on the whole domain $D$.
\\(I) In the case when \eqref{f} is true, 
it holds \begin{equation}\label{zek0}
y_\e \geq 0 , \ z_\e^k \geq 0,\  z_\e \geq 0 \quad \ae D .\end{equation}
 Our next aim is   to compare $z_\e$ with $y_\e$. To this end, we rewrite \eqref{yee} as
\begin{equation}\label{yee0}
  \begin{aligned}
   -\laplace z_\e^k + \beta(z_\e^k)+\frac{1}{\e}H_\e(g_\e)z_\e^k&=f +\underbrace{\Big[\frac{1}{\e}H_\e(g_\e)-\frac{1}{k}H_k(\widetilde g_\e)\Big]}_{\leq 0}\underbrace{z_\e^k}_{\geq 0}\quad \text{in }D,
   \\z_\e^k&=0\quad \text{on } \partial D.
 \end{aligned}
\end{equation}
We observe that, by \eqref{reg_h}, it holds $H_k \geq H_\e$ for $k \leq \e$, which implies
\[\frac{1}{\e}H_\e(g_\e) \leq \frac{1}{k}H_k(g_\e) \leq \frac{1}{k}H_k(\widetilde g_\e) \quad \text{a.e.\,in }D,\quad \forall\,0<k\leq\eps.\] Note that in the last inequality we employed the monotonicity of $H_k$, cf.\,\eqref{reg_h}, and the fact that  $g_\e \leq \widetilde g_\e$, by construction, see Lemma \ref{lem:g>0}. 
Hence, comparing \eqref{ye} with \eqref{yee0} yields 
\[z_\e^k \leq y_\e    \quad \ae D,\quad \forall\,0<k\leq\eps.\] By passing to the limit $k \to 0$, where we use \eqref{zek}, we  arrive at 
\[z_\e \leq y_\e    \quad \ae D.\]Then, \eqref{zek0} and $y_d \leq 0 \ \ae E$, see \eqref{f}, lead to
\[0\leq z_\e -y_d \leq y_\e -y_d\quad  \ae E,\]
whence \eqref{ye4} follows.
\\(II) The case when \eqref{f0} in Assumption \ref{yd_f}  is true can be  treated completely analogously. One obtains
\[
y_\e \leq 0 , \ z_\e^k \leq 0,\  z_\e \leq 0 \quad \ae D,\]
\[z_\e \geq y_\e    \quad \ae D.\] Then, $y_d \geq 0 \ \ae E$, cf.\ \eqref{f0}, gives in turn \[0\geq z_\e -y_d \geq y_\e -y_d \quad  \ae E.\]
 The proof is now complete.
\end{proof}

\subsection{Existence of optimal solutions for \eqref{p10}}\label{ex_p10}
In the proof of our main result (Theorem \ref{thm:cor}), we will need that the approximating control problem \eqref{p10} admits optimal solutions (when the admissible set $\FF$ is restricted to a certain ball). For this, we gather some useful convergences in the next lemma. Throughout this subsection, $\eps>0$ is arbitrary but fixed, and the solution operator to \eqref{eq} is denoted by $S_\eps$, cf.\, Lemma \ref{lem:S}.

\begin{lemma}\label{wc}
Let $\{g_n\} \subset  \FF$ with \[g_n \weakly g \quad \text{in } \WW,\text{ as }n \to \infty.\] Then, 
\begin{equation}\label{he_conv}
H_\eps (g_{n}) \to H_\eps(g) \quad \text{in }L^q(D),\ q \in[1,\infty) \ \text{ as }n \to \infty,\end{equation}
\begin{equation}\label{wc_S}
S_\eps(g_n) \weakly S_\eps(g) \quad \text{in }H^2(D) \cap \hd, \text{ as }n \to \infty.\end{equation}
\end{lemma}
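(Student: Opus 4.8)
The plan is to establish the two convergences separately, using the weak convergence $g_n \weakly g$ in $\WW$ as the common starting point. The key observation is that $\WW = \ld \cap \hde$ with $s>1$, so the embedding $\hde \embed L^\infty(D\setminus \bar E)$ holds compactly (since $s>1$ gives $\hde \embed\embed C(\overline{D\setminus E})$ up to the usual Rellich-type argument). Thus from $g_n \weakly g$ in $\WW$ I would first extract, via compactness, the strong convergences
\begin{equation}\label{pp:strong}
g_n \to g \quad \text{in }\ld, \qquad g_n \to g \quad \text{in }L^\infty(D\setminus \bar E),
\end{equation}
the first coming from the compact embedding of the $\hde$-component combined with boundedness in $\ld$ on $E$ (one passes to a subsequence; the limit is identified as $g$ by uniqueness of weak limits, so the whole sequence converges). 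These strong convergences are exactly the hypotheses feeding the earlier lemmas, which is the point of the $\WW$-setting.

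For \eqref{he_conv}, I would use that $H_\eps:\R\to[0,1]$ is globally Lipschitz (Remark after \eqref{reg_h'}), so $|H_\eps(g_n)-H_\eps(g)|\leq L_{H_\eps}|g_n-g|$ pointwise a.e.\ on $D$, whence
\begin{equation*}
\|H_\eps(g_n)-H_\eps(g)\|_{\ld} \leq L_{H_\eps}\,\|g_n-g\|_{\ld} \to 0
\end{equation*}
directly from the strong $\ld$ convergence in \eqref{pp:strong}. This step is routine. For \eqref{wc_S}, I would abbreviate $y_n:=S_\eps(g_n)$, $y:=S_\eps(g)$. The uniform bound from Lemma \ref{lem:S}, namely $\|y_n\|_{\hdd\cap\li}\leq c_1+c_2\|g_n\|_{\ld}$ together with the $H^2$ bound, gives (after extracting a subsequence) a weak limit $y_n\weakly \widehat y$ in $H^2(D)\cap \hd$; it then remains to identify $\widehat y = S_\eps(g)$ by passing to the limit in the weak form of \eqref{eq}. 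The linear and Laplacian terms pass by weak convergence; the term $\beta(y_n)$ passes via the compact embedding $\hdd\embed\embed\ld$ and the Lipschitz estimate \eqref{eq:flip} exactly as in \eqref{b2}; the right-hand side $f+\eps g_n \to f+\eps g$ in $\ld$ by \eqref{pp:strong}; and the zeroth-order coefficient term $\tfrac{1}{\eps}H_\eps(g_n)y_n$ is handled by writing $H_\eps(g_n)y_n-H_\eps(g)\widehat y = H_\eps(g_n)(y_n-\widehat y)+(H_\eps(g_n)-H_\eps(g))\widehat y$, the first piece controlled by \eqref{pp:strong}-type strong convergence of $y_n\to\widehat y$ in $\ld$ and $0\leq H_\eps\leq 1$, the second by \eqref{he_conv} and boundedness of $\widehat y$. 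By uniqueness of the solution to \eqref{eq} (Lemma \ref{lem:S}), $\widehat y=S_\eps(g)$, and since every subsequence has a further subsequence with this same limit, the full sequence converges.

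The main obstacle I anticipate is the nonlinear coupling term $\tfrac{1}{\eps}H_\eps(g_n)y_n$ in the limit passage for \eqref{wc_S}: it is a product of two sequences, each converging only in relatively weak topologies, so one genuinely needs the \emph{strong} convergence of at least one factor. This is precisely why \eqref{he_conv} must be proved first (giving strong $\ld$ convergence of $H_\eps(g_n)$) and why the compact embedding $\hdd\embed\embed\ld$ is invoked to upgrade $y_n\weakly\widehat y$ to strong $\ld$ convergence. Once these two strong convergences are in hand, the product passes to the limit by the standard weak--strong pairing, and the argument closes.
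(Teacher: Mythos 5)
There is a genuine gap at the very first step. You claim that $g_n \weakly g$ in $\WW$ yields the strong convergence $g_n \to g$ in $\ld$, justifying the part on $E$ by ``boundedness in $\ld$ on $E$.'' But boundedness in $L^2(E)$ gives no compactness: the space $\WW=\ld\cap\hde$ imposes extra regularity only on $D\setminus \bar E$, so on $E$ the sequence is merely weakly convergent in $L^2(E)$ and may oscillate without converging strongly there. Consequently your proof of \eqref{he_conv} via the global Lipschitz estimate $\|H_\eps(g_n)-H_\eps(g)\|_{\ld}\leq L_{H_\eps}\|g_n-g\|_{\ld}$ does not close, because the right-hand side need not tend to zero.

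The missing idea is the one the paper uses: since $\{g_n\}\subset\FF$ and $\FF$ is weakly closed, $g\in\FF$ as well, so $g_n\leq 0$ and $g\leq 0$ a.e.\ in $E$; since $H_\eps$ vanishes on $(-\infty,0]$ by \eqref{reg_h}, one has $H_\eps(g_n)=H_\eps(g)=0$ a.e.\ in $E$ for every $n$, and no convergence of $g_n$ on $E$ is needed at all. On the complement, the compact embedding $\hde\embed\embed L^2(D\setminus E)$ (together with continuity of $H_\eps$) gives $H_\eps(g_n)\to H_\eps(g)$ in $L^2(D\setminus E)$, and the two pieces combine to \eqref{he_conv}. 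Your treatment of \eqref{wc_S} is essentially sound once \eqref{he_conv} is in hand: the limit passage in the coupling term via the weak--strong pairing and the identification of the limit by uniqueness match the paper's argument, and for the right-hand side $f+\eps g_n$ weak $\ld$ convergence already suffices when testing against a fixed $\phi$, so the unjustified strong convergence is not needed there either.
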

\begin{proof}We begin by observing that,  due to $\{g_n\} \subset  \FF$, we have  $g\in \FF$, and thus, it holds
\begin{equation}\label{chi}
H_\eps (g_{n})=0=H_\eps(g) \quad \ae E, \ \forall\,n.
\end{equation} In light of the compact embedding $\hde \embed \embed L^2(D \setminus E),$ and the continuity of $H_\eps$, we also have \[H_\eps (g_{n}) \to H_\eps(g) \quad \text{in }L^2(D\setminus E), \ \text{ as }n \to \infty.\]This implies a.e.\,convergence on a subsequence and since $H_\e(g_n)\in [0,1]$, we may now deduce  \eqref{he_conv}, by Lebesgue's dominated convergence theorem. To show \eqref{wc_S}, we abbreviate for simplicity $y_n:=S_\eps(g_n)$. As a result of Lemma \ref{lem:S}, the sequence $\{y_n\}$ is uniformly bounded in $H^2(D) \cap \hd,$ and we can thus extract a weakly convergent subsequence denoted by the same symbol so that
\[y_n \weakly \widetilde y \quad \text{in }H^2(D) \cap \hd \ \text{ as }n \to \infty.\]
 Passing to the limit $n \to \infty$ in \eqref{eq}, where one uses \eqref{he_conv}, then yields
\begin{equation}\label{eq_n}
  \begin{aligned}
   -\laplace \widetilde y + \beta(\widetilde y)+\frac{1}{\eps}H_\eps(g) \widetilde y&=f +\eps g\quad \text{a.e.\,in }D,
   \\\widetilde y&=0\quad \text{on } \partial D.
 \end{aligned}
\end{equation}From Lemma \ref{lem:S} we finally deduce 
$\widetilde y=S_\eps(g) \in H^2(D) \cap \hd$  and the proof of \eqref{wc_S} is  complete.\end{proof}

By means of the direct method of calculus of variations, we can now show the following result.
\begin{proposition}\label{ex_e}
The approximating optimal control problem \eqref{p10} admits at least one global minimizer in $\FF$.
\end{proposition}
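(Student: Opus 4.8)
The plan is to run the direct method of the calculus of variations, with essentially all the compactness work already delegated to Lemma~\ref{wc}. First I would note that the objective functional of \eqref{p10}, call it $J_\eps$, is bounded below by zero: the tracking term is nonnegative, the relation $1-H_\eps(g)\in[0,1]$ together with $\alpha\geq0$ makes the second term nonnegative, and the Tikhonov term is a square. Since $\bar g_s\in\FF_s\subset\FF$, the set $\FF$ is nonempty, so the infimum $\iota:=\inf_{g\in\FF}J_\eps(g)$ is a finite nonnegative number. I would then fix a minimizing sequence $\{g_n\}\subset\FF$ with $J_\eps(g_n)\to\iota$.

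Next I would extract a weak limit using coercivity. Because the first two terms of $J_\eps$ are nonnegative, boundedness of $\{J_\eps(g_n)\}$ forces $\tfrac12\|g_n-\bar g_s\|_{\WW}^2$ to remain bounded, whence $\{g_n\}$ is bounded in the Hilbert space $\WW$. By reflexivity there is a subsequence (not relabelled) with $g_n\weakly\bar g$ in $\WW$. Since $\FF=\{g\in\WW:g\leq0\text{ a.e.\,in }E\}$ is convex and closed in $\WW$, it is weakly closed, so $\bar g\in\FF$ (this admissibility is in fact already recorded inside the proof of Lemma~\ref{wc}).

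Then I would pass to the limit term by term. By \eqref{wc_S} in Lemma~\ref{wc}, $S_\eps(g_n)\weakly S_\eps(\bar g)$ in $H^2(D)\cap\hd$; combined with the compact embedding $\hd \embed \embed \ld$ this upgrades to $S_\eps(g_n)\to S_\eps(\bar g)$ strongly in $L^2(E)$, so the tracking term converges. Likewise \eqref{he_conv} gives $H_\eps(g_n)\to H_\eps(\bar g)$ in $\ld$, so the second term converges. Finally $g\mapsto\tfrac12\|g-\bar g_s\|_{\WW}^2$ is convex and continuous, hence weakly lower semicontinuous. Collecting these three facts yields
\[
J_\eps(\bar g)\;\leq\;\liminf_{n\to\infty}J_\eps(g_n)\;=\;\iota,
\]
so that $\bar g\in\FF$ is a global minimizer, which is the claim.

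Regarding the principal difficulty: the genuinely delicate point — weak continuity of the \emph{non-smooth} control-to-state map $S_\eps$ and strong $\ld$-convergence of $H_\eps(g_n)$ — has already been resolved in Lemma~\ref{wc}, whose engine is the compact embedding $\hde \embed \embed L^2(D\setminus E)$ that tames the otherwise troublesome composition with $H_\eps$ off $E$. Given that, the remaining work is routine: the only ingredient that is merely lower semicontinuous rather than continuous along the minimizing sequence is the Tikhonov term, but this is precisely the term supplying $\WW$-coercivity, and weak lower semicontinuity of the Hilbert-space norm is classical, so neither convexity of the full functional nor any constraint qualification is needed.
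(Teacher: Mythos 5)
Your proof is correct and follows exactly the route the paper intends: the paper gives no written proof beyond invoking the direct method of the calculus of variations on the basis of Lemma~\ref{wc}, and your argument (coercivity from the Tikhonov term, weak closedness of the convex set $\FF$, the two convergences \eqref{he_conv} and \eqref{wc_S}, and weak lower semicontinuity of the $\WW$-norm) is precisely the intended filling-in of that outline.
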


\section{Correlation between \eqref{p10} and shape optimization}\label{cor}
This section contains the main results of the present paper. One of them states  that local optima in the $\ld-$sense of \eqref{p_shh} (Definition \ref{def_sh}) can be approximated by local minimizers of the approximating control problems \eqref{p10} introduced in the last section. As a consequence, we deduce that each parametrization of the optimal shape of \eqref{p_sh} is the limit of global optima of \eqref{p10} (Corollary \ref{cor:os}). These two findings  allowed us to continue the research on the topic of optimality systems for \eqref{p_sh} in the recent work \cite{p3}.

Similarly to Definition \ref{def_sh}, we introduce the notion of local optimality for the approximating control problem.
\begin{definition}\label{def_e}Let $\e>0$ be fixed and $\bar g_{\mathfrak{s}} \in \FF_{\mathfrak{s}}$.
We say that $\bar g_\eps \in \FF$ is locally optimal for \eqref{p10} in the $\ld$ sense, if there exists $r>0$ such that 
\begin{equation}\label{loc_opt_e}
j_\eps(\bar g_\eps) \leq j_\eps(g) \quad \ff g \in \FF \text{ with }\|g-\bar g_\e\|_{\ld}\leq  r,
\end{equation}where $$j_\eps(g):=\int_E (S_\eps( g)(x)-y_d(x))^2 \; dx+\alpha\,\int_{D} (1-H_\eps(g))(x)\,dx+\frac{1}{2}\,\|g-\bar g_{\mathfrak{s}}\|_{\WW}^2$$ is the reduced cost functional associated to the control problem \eqref{p10}.
\end{definition}
\begin{theorem}\label{thm:cor}
Let $\bar g_{\mathfrak{s}} \in \FF_{\mathfrak{s}}$ be a local minimizer of \eqref{p_shh} in the sense of Definition \ref{def_sh}. Then, under Assumption \ref{yd_f},  there exists a sequence of local minimizers $\{\bar g_\eps\}$ of \eqref{p10} such that 
\[\bar g_\eps \to \bar g_{\mathfrak{s}} \quad \text{in }\WW, \text{ as }\eps \searrow 0.\]{Moreover,}\[S_\eps(\gbe) \weakly \SS(\bar g_{\mathfrak{s}})\quad \text{in }\hd, \text{ as }\eps \searrow 0.\]
\end{theorem}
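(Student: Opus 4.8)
The plan is to follow the adapted penalization strategy of \cite{barbu84}, exploiting that the penalty term $\tfrac12\|g-\bar g_s\|_\WW^2$ in $j_\eps$ singles out $\bar g_s$ among all competitors. Let $r>0$ be the radius of $\ld$-local optimality of $\bar g_s$ from Definition \ref{def_sh}, and for each $\eps>0$ consider the auxiliary problem of minimizing $j_\eps$ over $B:=\{g\in\FF:\|g-\bar g_s\|_{\ld}\le r\}$. First I would establish that this restricted problem admits a global minimizer $\gbe$: a minimizing sequence is bounded in $\WW$ because the penalty term is coercive, and $B$ is convex and closed, hence weakly sequentially closed in $\WW$ (the map $g\mapsto\|g-\bar g_s\|_{\ld}$ being convex and continuous, thus weakly lower semicontinuous); the weak lower semicontinuity of $j_\eps$ then follows from Lemma \ref{wc}, exactly as in Proposition \ref{ex_e}.

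Next I would derive an upper bound. Since $\bar g_s\in\FF_s\subset\FF$ and $\|\bar g_s-\bar g_s\|_{\ld}=0\le r$, testing the minimality of $\gbe$ with $\bar g_s$ gives $j_\eps(\gbe)\le j_\eps(\bar g_s)$. Because $\bar g_s\in\FF_s$ satisfies the hypotheses of Proposition \ref{prop:cor} (Remark \ref{rem:fs}), we have $S_\eps(\bar g_s)\weakly\SS(\bar g_s)$ in $\hd$, hence strongly in $L^2(E)$ by compact embedding, while $H_\eps(\bar g_s)\to H(\bar g_s)$ in $L^q(D)$ and \eqref{h_eq} identifies the limit of the perimeter term; thus $j_\eps(\bar g_s)\to\JJ(\bar g_s)$ and so $\limsup_{\eps\to0}j_\eps(\gbe)\le\JJ(\bar g_s)$. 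In particular the penalty term stays bounded, so $\{\gbe\}$ is bounded in $\WW$; passing to a subsequence, $\gbe\weakly g^\star$ in $\WW$ with $g^\star\in\FF$, and by the compact embeddings $\hde\embed\embed L^2(D\setminus E)$ and (since $s>1$) $\hde\embed\embed L^\infty(D\setminus\bar E)$ the convergence is strong in $L^2(D\setminus E)\cap L^\infty(D\setminus\bar E)$.

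The heart of the argument — and the step I expect to be the main obstacle — is the matching lower bound
\[\liminf_{\eps\to0}\Big[\int_E(S_\eps(\gbe)-y_d)^2\,dx+\alpha\int_D(1-H_\eps(\gbe))\,dx\Big]\ \ge\ \JJ(\bar g_s).\]
The difficulty is that the reduced cost $\JJ$ is only defined on the non-convex set $\FF_s$, whereas $\gbe$ lives in the convex relaxation $\FF$; this is precisely where the density Proposition \ref{dense} is indispensable. For each $\eps$ I would use it to pick $\widehat g_\eps\in\FF_s$ with $\|\widehat g_\eps-\gbe\|_{\ld}$ as small as needed — in particular small enough to beat the $\eps$-dependent Lipschitz constant $L_{M,\eps}$ of $S_\eps$ from Lemma \ref{lem:S}, so that $\int_E(S_\eps(\gbe)-y_d)^2$ and $\int_E(S_\eps(\widehat g_\eps)-y_d)^2$ share the same $\liminf$, and small enough to keep $\widehat g_\eps$ inside the $\ld$-ball of local optimality of $\bar g_s$ (this is the content of the remark following Definition \ref{def_sh}; a convex-combination adjustment $\bar g_s+\lambda(\gbe-\bar g_s)$, $\lambda<1$, absorbs the case where $\gbe$ sits on the boundary of $B$). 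Since $\{\widehat g_\eps\}\subset\FF_s$ is bounded in $\ld$, Lemma \ref{lem:convv} (which invokes Assumption \ref{yd_f}) yields $\liminf\int_E(S_\eps(\widehat g_\eps)-y_d)^2\ge\liminf\int_E(\SS(\widehat g_\eps)-y_d)^2$, and the $\ld$-local optimality of $\bar g_s$ gives $\JJ(\bar g_s)\le\JJ(\widehat g_\eps)$. Combining these with the pointwise bound $H_\eps\le H$ to control the perimeter term delivers the displayed inequality; the perimeter comparison is the delicate point, since sublevel sets are not continuous under $\ld$-convergence.

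Finally I would close the argument. Writing $j_\eps=a_\eps+b_\eps$ with $a_\eps$ the first two terms and $b_\eps=\tfrac12\|\gbe-\bar g_s\|_\WW^2$, the lower bound gives $\liminf a_\eps\ge\JJ(\bar g_s)$ and the weak lower semicontinuity of the $\WW$-norm gives $\liminf b_\eps\ge\tfrac12\|g^\star-\bar g_s\|_\WW^2$; together with the upper bound $\limsup(a_\eps+b_\eps)\le\JJ(\bar g_s)$ this forces $\|g^\star-\bar g_s\|_\WW=0$, i.e.\ $g^\star=\bar g_s$, and moreover $\limsup b_\eps\le0$, i.e.\ $\gbe\to\bar g_s$ strongly in $\WW$. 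Uniqueness of the limit upgrades this to convergence of the whole sequence. Since $\bar g_s\in\FF_s$ and $\gbe\to\bar g_s$ strongly in $\ld\cap L^\infty(D\setminus\bar E)$, Lemma \ref{lem:cp} then yields the stated $S_\eps(\gbe)\weakly\SS(\bar g_s)$ in $\hd$. It remains to note that strong convergence makes the constraint $\|\gbe-\bar g_s\|_{\ld}\le r$ inactive for $\eps$ small, so that each such $\gbe$ is in fact a local minimizer of the unrestricted problem \eqref{p10}, as required.
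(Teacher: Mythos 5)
Your proposal follows essentially the same route as the paper's proof: an auxiliary problem restricted to an $\ld$-ball around $\bar g_s$ (solved via Lemma \ref{wc}), the upper bound via Proposition \ref{prop:cor}, the lower bound via the density Proposition \ref{dense} combined with the $\eps$-dependent Lipschitz constants of $S_\eps$ and $H_\eps$, Lemma \ref{lem:convv}, the pointwise bound $H_\eps\le H$ and the local optimality of $\bar g_s$, followed by the squeeze argument, the inactivity of the ball constraint, and Lemma \ref{lem:cp} for the states. The only deviation is that the paper restricts the auxiliary problem to the ball of radius $r/2$, so that $\widehat g_\eps\in\FF_s$ chosen within $r/2$ of $\gbe$ automatically lies in the radius-$r$ ball of local optimality of $\bar g_s$; this sidesteps the convex-combination adjustment you introduce to cover the case where $\gbe$ sits on the boundary of your radius-$r$ ball.
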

\begin{proof}
The  proof is a variation of well-known ideas \cite{barbu84}. However, let us point out that we need to take into account that the control-to-state map $\SS$ is defined on $\FF_{\mathfrak{s}}$, not on $\FF$.  To bridge the gap between these two sets, we will make use of the essential Proposition \ref{dense} and the convergence results from subsection \ref{sec:s_e}.

Denote by $\overline{ B_{L^2(D)}(\bar g_{\mathfrak{s}},r)}$, $r>0,$ the (closed) ball of local optimality of $\bar g_{\mathfrak{s}} \in \FF_{\mathfrak{s}}$. Let $\e>0$ be fixed. Thanks to Lemma \ref{wc}, we can show by the direct method of calculus of variations that the optimization problem 
\begin{equation}\label{min_je}
\min_{g \in  \FF \cap \overline{ B_{L^2(D)}(\bar g_{\mathfrak{s}},r/2})} j_\eps(g)
\end{equation}
 admits a global minimizer 
 \[\bar g_\eps \in \FF \cap \overline{ B_{L^2(D)}(\bar g_{\mathfrak{s}},r/2)}.\]
 We recall here that
 $\WW$ is the Hilbert space $\ld \cap \hde$, $s\in(1,2]$, endowed with the norm 
\[\|\cdot\|_{\WW}^2:=\|\cdot\|_{\ld}^2+\|\cdot\|_{H^s(D\setminus \bar E)}^2,\]
and 
\begin{equation}
 \FF:=\{g \in \WW: g \leq 0\text{ a.e.\,in }E\},
\end{equation}
cf.\,\eqref{f_tilde}. Note that the set $ \FF \cap \overline{ B_{L^2(D)}(\bar g_{\mathfrak{s}},r/2)}$ is weakly closed, i.e., for $\{z_n\}\subset  \FF \cap \overline{ B_{L^2(D)}(\bar g_{\mathfrak{s}},r/2)}$ we have 
\[z_n \weakly z \quad \text{in }\WW \Rightarrow z \in \FF\cap \overline{ B_{L^2(D)}(\bar g_{\mathfrak{s}},r/2)}. \]
 Next, we recall that the reduced cost functional associated to the control problem \eqref{p_shh} is given by
 $$\JJ(g)=\int_E (\SS( g)(x)-y_d(x))^2 \; dx+\alpha\,\int_{D} (1-H(g))(x)\,dx, \quad g \in \FF_{\mathfrak{s}}.$$ 
 Due to Proposition \ref{prop:cor} (see also Remark \ref{rem:fs}),  it holds 
  \begin{equation}\label{f_e}\begin{aligned}
\JJ(\bar g_{\mathfrak{s}})&=\lim_{\eps \to 0} \int_E (S_\eps( \bar g_{\mathfrak{s}})(x)-y_d(x))^2 \; dx+\alpha\,\int_{D} (1-H_\eps(\bar g_{\mathfrak{s}}))(x)\,dx
\\&\quad \geq \limsup_{\eps \to 0} \int_E (S_\eps( \gbe)(x)-y_d(x))^2 \; dx+\alpha\,\int_{D} (1-H_\eps(\gbe))(x)\,dx
\\&\qquad +\frac{1}{2}\,\|\gbe-\bar g_{\mathfrak{s}}\|_{\WW}^2, \end{aligned} \end{equation}\normalsize where for the last inequality we relied on the fact that $\bar g_\eps$ is a global minimizer of \eqref{min_je} and that $\bar g_{\mathfrak{s}}$ is admissible for \eqref{min_je}.
Thanks to Proposition \ref{dense}, for each $\eps>0$, there exists $\widehat g_\eps \in \FF_{\mathfrak{s}}$ so that 
\begin{equation}\label{ge_d}
\|\widehat g_\eps-\gbe\|_{\ld}\leq \frac{r\eps}{2}\min\{ \frac{1}{L_{H_\eps}},\frac{1}{L_{\|\bar g_{\mathfrak{s}}\|_{\ld}+r,\eps}}\}\leq r/2,
\end{equation}
where $L_{H_\eps}>0$ and $L_{\|\bar g_{\mathfrak{s}}\|_{\ld}+r,\eps}>0$ are the Lipschitz constant of $H_\eps$ and $S_\eps$ (see \eqref{slip}), respectively; note that the second estimate in \eqref{ge_d} is true for $\eps$ small enough. 
Then,
\begin{equation}\label{he_c}
\|H_\eps(\widehat g_\eps)-H_\eps(\gbe)\|_{\ld}\leq r\eps/2,\end{equation}
and according to \eqref{slip}, we also have
\begin{equation}\label{se_c}\|S_\eps(\widehat g_\eps)-S_\eps(\gbe)\|_{\ld}\leq L_{\|\bar g_{\mathfrak{s}}\|_{\ld}+r,\eps}\|\widehat g_\eps-\gbe\|_{\ld}\leq r\eps/2. 
\end{equation}
Here we note that \begin{equation}\label{gehat}
\|\widehat g_\eps\|_{\ld},\|\gbe\|_{\ld}\leq \|\bar g_{\mathfrak{s}}\|_{\ld}+r,\end{equation}since $\gbe$ is admissible for \eqref{min_je}  and in light of \eqref{ge_d}.

%In light of \eqref{f_e}, there exists a weakly convergent subsequence of $\{\gbe\}$ so that 
%\[\gbe \weakly \widetilde g \quad \text{in }\ld \cap \hde.\]
%As a consequence, $\widetilde g \in \FF$, and thus,  \[H(\widetilde g)=0 =H_\eps(\gbe)\quad \text{a.e.\,in }E,\]
%\ko{\[H_\eps(\gbe) \to H(\widetilde g) \quad \text{in }L^2(D\setminus E).\]}

Going back to \eqref{f_e}, we can thus write 
  \begin{equation}\begin{aligned}\label{11}
\JJ(\bar g_{\mathfrak{s}})&=\lim_{\eps \to 0} \int_E (S_\eps( \bar g_{\mathfrak{s}})(x)-y_d(x))^2 \; dx+\alpha\,\int_{D} (1-H_\eps(\bar g_{\mathfrak{s}}))(x)\,dx
\\&\geq \limsup_{\eps \to 0} \int_E (S_\eps( \gbe)(x)-y_d(x))^2 \; dx+\alpha\,\int_{D} (1-H_\eps(\gbe))(x)\,dx
\\&\quad +\frac{1}{2}\,\|\gbe-\bar g_{\mathfrak{s}}\|_{\WW}^2
\\&\geq \liminf_{\eps \to 0} \int_E (S_\eps( \gbe)(x)-y_d(x))^2 \; dx+\alpha\,\int_{D} (1-H_\eps(\gbe))(x)\,dx
\\&\quad +\frac{1}{2}\,\|\gbe-\bar g_{\mathfrak{s}}\|_{\WW}^2
\\&= \liminf_{\eps \to 0} \int_E (S_\eps( \widehat g_\eps)(x)-y_d(x))^2 \; dx+\alpha\,\int_{D} (1-H_\eps(\widehat g_\eps))(x)\,dx
\\&\quad +\frac{1}{2}\,\|\gbe-\bar g_{\mathfrak{s}}\|_{\WW}^2,
 \end{aligned} \end{equation}
where we relied on \eqref{he_c} and \eqref{se_c}.
 Thanks to  Lemma \ref{lem:convv} combined with \eqref{gehat} and  $H_\e\leq H$ (see \eqref{reg_h} and \eqref{h}), \eqref{11} can be continued as 
    \begin{equation}\begin{aligned}\label{111}
\JJ(\bar g_{\mathfrak{s}})&\geq \liminf_{\eps \to 0} \int_E (\SS( \widehat g_\eps)(x)-y_d(x))^2 \; dx+\alpha\,\int_{D} (1-H(\widehat g_\eps))(x)\,dx
\\&\quad +\frac{1}{2}\,\|\gbe-\bar g_{\mathfrak{s}}\|_{\WW}^2
\\&\geq  \int_E (\SS( \bar g_{\mathfrak{s}})(x)-y_d(x))^2 \; dx+\alpha\,\int_{D} (1-H(\bar g_{\mathfrak{s}}))(x)\,dx
\\&=\JJ(\bar g_{\mathfrak{s}}).
 \end{aligned} \end{equation}
  The last estimate in \eqref{111} is due to the fact that $\widehat g_\eps \in \FF_{\mathfrak{s}}$ is in the ball of local optimality of $\bar g_{\mathfrak{s}}$ (in light of \eqref{ge_d} and since $\gbe$ is admissible for \eqref{min_je}).
 From \eqref{11} and \eqref{111} we can now conclude \begin{equation}\label{conv_gn}
 \begin{aligned}
\lim_{\eps \to 0} &\int_E (S_\eps( \gbe)(x)-y_d(x))^2 \; dx+\alpha\,\int_{D} (1-H_\eps(\gbe))(x)\,dx
+\frac{1}{2}\,\|\gbe-\bar g_{\mathfrak{s}}\|_{\WW}^2
\\&=\JJ(\bar g_{\mathfrak{s}})=\int_E (\SS( \bar g_{\mathfrak{s}})(x)-y_d(x))^2 \; dx+\alpha\,\int_{D} (1-H(\bar g_{\mathfrak{s}}))(x)\,dx
\\&=\lim_{\eps \to 0} \int_E (S_\eps( \gbe)(x)-y_d(x))^2 \; dx+\alpha\,\int_{D} (1-H_\eps(\gbe))(x)\,dx,\end{aligned}\end{equation}where the last identity follows by estimating as in \eqref{11}-\eqref{111}, without taking the term $\frac{1}{2}\,\|\gbe-\bar g_{\mathfrak{s}}\|_{\WW}^2$ into account. By \eqref{conv_gn}, we finally deduce 
\begin{equation}\label{gn}
\gbe \to \bar g_{\mathfrak{s}} \quad \text{in }\WW.\end{equation}
To show that $\gbe$ is  a local minimizer of \eqref{p10}, let $v \in \FF$ with $\|v-\gbe\|_{\ld}\leq r/4$ be arbitrary, but fixed. Then, by \eqref{gn}, we have for $\e>0$ small enough
\[\|v-\bar g_{\mathfrak{s}}\|_{\ld} \leq \|v-\gbe\|_{\ld}+\|\gbe-\bar g_{\mathfrak{s}}\|_{\ld}\leq r/2.\]As $\gbe$ is global optimal for \eqref{min_je}, this means that $j_\e(\gbe) \leq j_\e(v)$, whence the desired local optimality of $\gbe$ follows (Definition \ref{def_e}).

Since $s>1$, we have $\WW \embed \ld \cap L^\infty(D \setminus \bar E)$, and Lemma \ref{lem:cp} with \eqref{gn} implies \[S_\eps(\gbe) \weakly \SS(\bar g_{\mathfrak{s}})\quad \text{in }\hd, \text{ as }\eps \searrow 0.\]
%Further, from \eqref{gn} we infer
%\[H_\eps(\gbe) \to H(\bar g_{\mathfrak{s}})\quad \text{in }\ld.\]Thus, as a result of \eqref{conv_gn} and \eqref{gn}, it holds \[S_\eps(\gbe) \to \SS(\bar g_{\mathfrak{s}})\quad \text{in }L^2(E).\]
The proof is now complete.\end{proof}

\begin{remark}[Approximation of global optima of \eqref{p_shh}]
A short  inspection of the proof of Theorem \ref{thm:cor} shows that, if $\bar g_{\mathfrak{s}}$ is a global minimizer, then the associated sequence $\{\bar g_\e\}$ inherits this property, i.e., it consists of global optima of \eqref{p10}.
Here we pay attention to the fact that \eqref{gehat} is replaced by
\begin{equation}\label{gehat'}
\|\widehat g_\eps\|_{\ld},\|\gbe\|_{\ld}\leq c,\end{equation}where $c>0$ is a constant independent of $\eps$ (this can be proven by arguing as in the proof of \eqref{f_e}).
Thus, we can still make use of Lemma \ref{lem:convv} to deduce  \eqref{111}.
\end{remark}

As  a consequence of Theorem \ref{thm:cor} and Proposition \ref{rem:equiv} we then arrive at the following

\begin{corollary}[Approximation of  the optimal shape]\label{cor:os}
Suppose that  Assumption \ref{yd_f} is satisfied. 
Let $\O^\star \in \OO$ be an optimal shape for \eqref{p_sh}. Then, for each  $g^\star \in \FF_{\mathfrak{s}}$ with $\O_{g^\star}=\O^\star$, there exists  
a sequence of global minimizers $\{\bar g_\eps\}$ of \eqref{p10} such that 
\[\bar g_\eps \to g^\star \quad \text{in }\WW, \text{ as }\eps \searrow 0.\]
{Moreover,}\[S_\eps(\gbe) \weakly \SS(g^\star)\quad \text{in }\hd, \text{ as }\eps \searrow 0.\]
\end{corollary}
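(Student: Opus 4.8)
The plan is to derive Corollary~\ref{cor:os} by combining the equivalence result of Proposition~\ref{rem:equiv} with the approximation result of Theorem~\ref{thm:cor}. The key observation is that Proposition~\ref{rem:equiv} converts the optimal shape $\O^\star \in \OO$ into information about global minimizers of the function-space problem \eqref{p_shh}, and Theorem~\ref{thm:cor} then approximates those minimizers by local optima of \eqref{p10}; the only extra work is to verify that the \emph{global} character of the minimizer is preserved under this approximation. First I would fix an arbitrary $g^\star \in \FF_s$ satisfying $\O_{g^\star} = \O^\star$ (such functions exist, and in fact there are infinitely many, by Lemma~\ref{lem:g>0}). By the first assertion of Proposition~\ref{rem:equiv}, since $\O^\star$ is an optimal shape for \eqref{p_sh}, the function $g^\star$ is a global minimizer of \eqref{p_shh}, i.e.\ $\JJ(g^\star) \leq \JJ(g)$ for all $g \in \FF_s$.

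Next I would apply Theorem~\ref{thm:cor} with the global minimizer $g^\star$ playing the role of $\bar g_s$. Under Assumption~\ref{yd_f}, the theorem produces a sequence $\{\bar g_\eps\}$ of local minimizers of \eqref{p10} with $\bar g_\eps \to g^\star$ in $\WW$ and $S_\eps(\bar g_\eps) \weakly \SS(g^\star)$ in $\hd$ as $\eps \searrow 0$. This already delivers both convergences claimed in the corollary. The remaining point is to upgrade ``local minimizer'' to ``global minimizer'' of \eqref{p10}. Here I would invoke the Remark following Theorem~\ref{thm:cor} (``Approximation of global optima of \eqref{p_shh}''), which records precisely that when $\bar g_s$ is in fact a global minimizer of \eqref{p_shh}, the constructed sequence $\{\bar g_\eps\}$ consists of global optima of \eqref{p10}. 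Since $g^\star$ is a global minimizer of \eqref{p_shh}, this remark guarantees that the associated $\{\bar g_\eps\}$ are global minimizers of \eqref{p10}, as required.

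The proof is therefore essentially a bookkeeping argument that threads together three earlier results, and no new estimates are needed. The step that carries the actual content is the appeal to the global-optimality remark: one must check that its hypotheses are met, namely that $g^\star$ is genuinely a \emph{global} (not merely local) minimizer of \eqref{p_shh}, which is exactly what the first half of Proposition~\ref{rem:equiv} supplies. I expect the main subtlety—modest as it is—to lie in confirming that the uniform bound \eqref{gehat'} used in that remark applies here, so that Lemma~\ref{lem:convv} can still be invoked to obtain the chain of inequalities \eqref{111} in the global case; once this is in place, the conclusion $\bar g_\eps \to g^\star$ in $\WW$ together with the weak state convergence follows verbatim from the proof of Theorem~\ref{thm:cor}.

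\begin{proof}
Let $g^\star \in \FF_s$ be arbitrary but fixed with $\O_{g^\star} = \O^\star$; the existence of such a mapping is ensured by Lemma~\ref{lem:g>0}. Since $\O^\star \in \OO$ is an optimal shape for \eqref{p_sh}, Proposition~\ref{rem:equiv} implies that $g^\star$ is a global minimizer of \eqref{p_shh}, that is,
\begin{equation}\label{cor_glob}
\JJ(g^\star) \leq \JJ(g) \quad \ff g \in \FF_s.
\end{equation}
In particular, $g^\star$ is a local minimizer of \eqref{p_shh} in the sense of Definition \ref{def_sh}. Applying Theorem \ref{thm:cor} with $g^\star$ in the role of $\bar g_s$, we obtain, under Assumption~\ref{yd_f}, a sequence $\{\bar g_\eps\}$ of local minimizers of \eqref{p10} satisfying
\[\bar g_\eps \to g^\star \quad \text{in }\WW, \text{ as }\eps \searrow 0,\]
\[S_\eps(\bar g_\eps) \weakly \SS(g^\star)\quad \text{in }\hd, \text{ as }\eps \searrow 0.\]
It remains to verify that each $\bar g_\eps$ is in fact a \emph{global} minimizer of \eqref{p10}. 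By virtue of \eqref{cor_glob}, the global-optimality variant discussed in the remark following Theorem \ref{thm:cor} applies: since $g^\star$ is a global optimum of \eqref{p_shh}, the uniform bound \eqref{gehat'} replaces \eqref{gehat}, Lemma \ref{lem:convv} can still be invoked to derive the chain \eqref{111}, and one concludes that the sequence $\{\bar g_\eps\}$ consists of global minimizers of \eqref{p10}. This establishes both assertions and completes the proof.
\end{proof}
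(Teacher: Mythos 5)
Your proposal is correct and follows exactly the route the paper intends: the corollary is stated there as a direct consequence of Proposition \ref{rem:equiv} (which turns the optimal shape into a global minimizer $g^\star$ of \eqref{p_shh}), Theorem \ref{thm:cor} applied with $\bar g_s = g^\star$, and the remark on approximation of global optima to upgrade the local minimizers $\{\bar g_\eps\}$ to global ones via the uniform bound \eqref{gehat'}. No gaps; this matches the paper's argument.
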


\section*{Acknowledgment}
This work was financially supported by the DFG grant BE 7178/3-1 for the project "Optimal Control of Viscous
Fatigue Damage Models for Brittle Materials: Optimality Systems".  
\\The author thanks  Prof.\,Dan Tiba (Institute of Mathematics of the Romanian Academy) for  useful discussions during the writing of this
paper.

\appendix \section{A useful convergence}
\begin{lemma}\label{lem:app}Let $\MM$ be a measurable set and assume that 
$h_n \to h$ in $L^1(\MM)$ as $n \to \infty$. Then,
\[\mu\{x\in \MM:h(x)>0 \text{ and }h_n(x)\leq 0\} \to 0 \quad \text{ as }n \to \infty.\]
\end{lemma}
\begin{proof}
The proof is inspired by the proof of \cite[Lem.\,A.2]{hmw13} and uses the result established in \cite[Lem.\,A.1]{hmw13}. 
We abbreviate $\MM^+:=\{x \in \MM:h(x)>0\}$ and $\MM_n:=\{x\in \MM:h(x)>0 \text{ and }h_n(x)\leq 0\}.$ If the desired assertion is not true, then there exists $\gamma>0$ and a subsequence $\{n_k\}$ so that
\[\mu(\MM_{n_k}) \geq \gamma \quad \forall\, k\in \N.\]From \cite[Lem.\,A.1]{hmw13} we know that
\[\int_{\MM_{n_k}} h \,dx \geq \delta \quad \forall\, k\in \N\]for some $\delta>0.$
Further, it holds
\begin{align*}\|\max\{h_{n_k},0\}- \max\{h,0\}\|_{L^1(\MM^+)}&\geq \int_{\MM_{n_k}} |\max\{h_{n_k},0\}- \max\{h,0\}| \,dx
\\&\quad =\int_{\MM_{n_k}} h \,dx \geq \delta \quad \forall\, k\in \N, \end{align*}
where, for the identity, we made use of the definition of $\MM_n$. Since $h_n \to h$ in $L^1(\MM)$, by assumption, we get a contradiction and the proof is now complete.
\end{proof}

\bibliographystyle{plain}
\bibliography{strong_stat_coupled_pde}

\end{document}